\documentclass{birkau}
\usepackage{amsmath,amssymb,amsthm,mathtools,url}
\usepackage{hyperref}
\hypersetup{hidelinks}
\usepackage[T1]{fontenc}
\usepackage{lmodern}
\usepackage{microtype}
\usepackage{mathrsfs}
\usepackage{enumitem}
\usepackage[nameinlink,capitalize]{cleveref}
\usepackage{aliascnt}
\usepackage{xcolor}

\DeclareMathOperator{\Aut}{Aut}
\DeclareMathOperator{\GL}{GL}
\DeclareMathOperator{\Spec}{Spec}
\DeclareMathOperator{\CE}{CE}
\DeclareMathOperator{\CCE}{CCE}
\DeclareMathOperator{\Idem}{Idem}
\DeclareMathOperator{\Stab}{Stab}
\DeclareMathOperator{\id}{id}
\DeclareMathOperator{\Sym}{Sym}
\newtheorem{theorem}{Theorem}[section]
\newaliascnt{proposition}{theorem}
\newtheorem{proposition}[proposition]{Proposition}
\aliascntresetthe{proposition}
\newaliascnt{lemma}{theorem}
\newtheorem{lemma}[lemma]{Lemma}
\aliascntresetthe{lemma}
\newaliascnt{corollary}{theorem}
\newtheorem{corollary}[corollary]{Corollary}
\aliascntresetthe{corollary}

\theoremstyle{definition}
\newtheorem{definition}[theorem]{Definition}
\newtheorem{example}[theorem]{Example}

\newtheorem{remark}[theorem]{Remark}

\Crefname{theorem}{Theorem}{Theorems}
\Crefname{proposition}{Proposition}{Propositions}
\Crefname{lemma}{Lemma}{Lemmas}
\Crefname{corollary}{Corollary}{Corollaries}
\Crefname{definition}{Definition}{Definitions}
\Crefname{example}{Example}{Examples}
\Crefname{remark}{Remark}{Remarks}
\Crefname{question}{Question}{Questions}

\title[Rigidity and factor reconstruction for monoids with zero]
{Direct-product rigidity and factor \\ reconstruction for monoids with zero}

\author{Joseph Atalaye}
\address{Department of Mathematical Sciences, Stellenbosch University,
Stellenbosch 7600, South Africa}
\email{26828146@sun.ac.za}

\corrauthor{Liam Baker}
\address{Department of Mathematical Sciences, Stellenbosch University,
Stellenbosch 7600, South Africa}
\email{liambaker@sun.ac.za}
\urladdr{https://math.sun.ac.za/liambaker}

\author{Sophie Marques}
\address{Centro de Matem\'atica, Department of Mathematics, University of Minho,
Campus de Gualtar, 4710--057 Braga, Portugal}
\email{smarques@math.uminho.pt}
\urladdr{https://sites.google.com/site/sophiemarques64/}

\begin{document}

\subjclass{20M10, 20M20, 16W20, 13A15}
\keywords{monoid with zero, complemented central idempotent, direct product, automorphism group, wreath product, multiplicative monoid, connected ring}

\begin{abstract}
We study direct products of monoids with zero and give a criterion under which their factors can be recovered from the multiplicative structure alone.
While classical decomposition theory encodes direct products through factor congruences, central elements, and refinement properties, we give a concrete multiplicative reconstruction mechanism.
If the factors have no nontrivial complemented central idempotents, then the coordinate idempotents are precisely the atoms and coatoms of the complemented-central-idempotent poset, and their multiplicative stabilizers are precisely the coordinate factors and cofactors.
It follows that every isomorphism between such products is monomial, yielding the corresponding wreath-product description of automorphism groups.
More generally, every product decomposition is obtained by grouping the original factors; in particular, strict refinement follows.
We apply these results to multiplicative monoids of directly indecomposable unital rings, including connected commutative rings, and to arithmetic examples arising from residue-class rings.
\end{abstract}

\maketitle

\section{Introduction}
\label{sec:introduction}

Automorphisms of direct products can be considerably more complicated than products of automorphisms of the individual factors.
In general, homomorphisms between different factors may create off-diagonal terms and allow the coordinates to mix.
A basic rigidity problem is therefore to determine when the direct factors are forced by the multiplicative structure itself, so that every product isomorphism is factorwise up to permutation.

For groups, automorphisms of direct products have often been studied through matrices of homomorphisms between the factors; see \cite{BidwellCurranMcCaughan,BidwellII} and the determinant approach of \cite{BresciaDeterminant}.
Related questions for semigroups and monoids appear in \cite{AhmadidelirDoostie}.
In \cite{AtalayeBakerMarquesProducts}, we used a matrix method to study products of multiplicative monoids arising from commutative rings.

There is also a classical decomposition theory in universal algebra, where direct products are encoded by factor congruences and refinement properties control compatibility and uniqueness of decompositions; see \cite{ChangJonssonTarski,HoefnagelSRP}.
Boolean factor congruences are classical as well, and central elements have been developed as abstract decomposition data generalizing, among other examples, central idempotents in rings; see
\cite{SanchezTerrafBFC,SanchezTerrafVaggione, WillardBFC}.
Thus neither the general philosophy that central data encode product decompositions nor the theory of strict refinement is new here.

The contribution of the present paper is a different and very concrete multiplicative reconstruction mechanism for monoids with an absorbing zero.
Under a natural condition on complemented central idempotents, the individual coordinate factors can be recovered directly from multiplication.
The key distinction is between recognizing the coordinate directions and recovering the corresponding factors.
Centrality provides an isomorphism-invariant semilattice in which the relevant coordinate idempotents can be recognized, while their multiplicative stabilizers recover the factors themselves. 
To our knowledge, this coatom--stabilizer reconstruction has not previously been used to recover direct factors of monoids in this form.
It is this multiplicative detector, rather than the classical existence of central decomposition data, that is the distinctive mechanism of the paper.

This immediately leads to rigidity.
Since an isomorphism preserves complemented central idempotents, their order, and their stabilizers, it must permute the distinguished coordinate idempotents and hence the coordinate factors.
We prove that every isomorphism between products whose factors have no nontrivial complemented central idempotents is monomial: it is obtained by permuting the factors and applying isomorphisms factor by factor.
Once such monomial rigidity is known, the wreath-product description of automorphism groups is a standard consequence; we record it explicitly, including the case of repeated isomorphic factors.

The same reconstruction mechanism controls much more than automorphisms.
The strict refinement property itself is classical, and there are several general frameworks and sufficient conditions for it; see \cite{ChangJonssonTarski,IskanderSRP,HoefnagelSRP}.
What is more specific here is the concrete description of every product decomposition: each one is obtained by grouping the original coordinate factors, and the common refinement of two decompositions is obtained by intersecting the corresponding blocks.
Strict refinement is therefore a consequence of this explicit multiplicative reconstruction, not an additional hypothesis and not the novelty claim by itself.

The hypothesis is especially natural for multiplicative monoids of rings.
The classical ring-theoretic fact is that central idempotents encode direct-product decompositions of a unital ring.
In particular, a unital ring is directly indecomposable precisely when it has no nontrivial central idempotents.
Our application is different in that the maps under consideration are only isomorphisms of multiplicative monoids: no preservation of addition is assumed.
Thus the usual ring decomposition theorem does not by itself imply our rigidity statement.
This should also be distinguished from unique-addition results, where one proves that multiplication determines the additive structure; see \cite{StephensonUA}.
Our argument requires no unique-addition hypothesis.
In the commutative case, direct indecomposability is equivalent to connectedness of the spectrum, so the method applies well beyond domains and local rings.

These results also strengthen the framework of
\cite{AtalayeBakerMarquesProducts}.
That earlier work obtains factorwise automorphisms for products of multiplicative monoids of $D$-rings under total-ring-of-fractions and pairwise-cardinality hypotheses.
The present reconstruction replaces those product hypotheses by the single structural requirement that each factor have no nontrivial complemented central idempotents; for unital rings, this is equivalent to direct indecomposability.
It also accommodates repeated isomorphic factors through the permutation part of the monomial theorem.
Together with the prime-power calculations of
\cite{AtalayeBakerMarquesPrimePower}, this yields explicit local-to-global descriptions for the multiplicative automorphism groups of residue-class rings.

The novelty claim of the paper can therefore be stated narrowly:
classical central and refinement theories provide the surrounding decomposition framework, while our new ingredient is the purely multiplicative passage.

This mechanism yields the monomial rigidity theorem and the explicit classification of product decompositions from which strict refinement follows.

\medskip
\noindent\textbf{Organization of the paper.}
Section~\ref{sec:central} develops the complemented-central-idempotent machinery and the factor-reconstruction argument.
Section~\ref{sec:rigidity} proves the rigidity theorem, the wreath-product description of automorphism groups, and the refinement results.
Section~\ref{sec:rings} gives the applications to rings and arithmetic examples.

\section{Complemented central idempotents and factor reconstruction}
\label{sec:central}

We introduce the multiplicative structure that will be used to recover the factors of a direct product.
Central idempotents as carriers of decomposition data are classical, both for rings and in their universal-algebraic generalizations by central elements and factor congruences; see, for example, \cite{WillardBFC,SanchezTerrafVaggione}.
The point of this section is not to rederive that general theory.
Instead, for monoids with zero we isolate the complemented central idempotents and show that their order, combined with multiplicative stabilizers, directly reconstructs the coordinate factors.
The relevant objects are therefore not all central idempotents, but those admitting complements in the natural order on central idempotents.

\begin{definition}
Let $M$ be a monoid.
An element $e\in M$ is a \emph{central idempotent} if
$
e^2 = e
$
and
$
ex = xe
$
for every $x\in M$.
We write
$$
\CE(M)
= 
\{e\in M:e^2 = e\text{ and }ex = xe\text{ for all }x\in M\}.
$$
\end{definition}

Because central idempotents commute, their product is again a central idempotent.
Thus $\CE(M)$ is a commutative idempotent monoid under multiplication and hence a meet-semilattice for the order
$$
e\leq f
\quad\Longleftrightarrow\quad
ef = e.
$$
When $M$ has a zero, $0$ and $1$ are respectively the least and greatest elements.
Centrality is essential at this point: arbitrary idempotents need not commute, their product need not be idempotent, and they do not in general form such a semilattice under multiplication.

\begin{definition}
Let $M$ be a monoid with zero.
A central idempotent $e\in\CE(M)$ is \emph{complemented} if there exists $f\in\CE(M)$ such that
$
ef = 0
$
and $1$ is the unique common upper bound of $e$ and $f$ in $\CE(M)$.
We denote the set of complemented central idempotents by
$
\CCE(M).
$
\end{definition}

\begin{remark}
The definition involves only multiplication, the absorbing $0$, and the identity $1$, and does not require $\CE(M)$ to be a lattice.
The elements $0$ and $1$ are always complementary, so
$
\{0,1\}\subseteq\CCE(M).
$
\end{remark}

The restriction to complemented central idempotents is important.
A nontrivial central idempotent of a monoid need not reflect a nontrivial direct-product decomposition.
Recall that a nontrivial monoid $M$ is \emph{directly indecomposable} if every decomposition
$
M\cong A\times B
$
has a trivial factor.

\begin{example}
\label{ex:three-chain}
Let
$
L_3 = \{0<e<1\}
$
be the three-element chain with multiplication given by meet,
$
xy = x\wedge y.
$
Then $L_3$ is a commutative monoid with zero and every element is a central idempotent.
Nevertheless, $L_3$ is directly indecomposable:
if
$
L_3\cong A\times B
$
with $A$ and $B$ nontrivial, then
$
3 = |A||B|,
$
which is impossible.

The middle element $e$ is not complemented.
Hence
$
\CCE(L_3) = \{0,1\},
$
whereas
$
\CE(L_3) = L_3.
$
Thus the condition
$
\CCE(M) = \{0,1\}
$
is strictly weaker than
$
\CE(M) = \{0,1\}.
$
\end{example}

We next record the behaviour of central idempotents under products.
For the remainder of the paper, let $I$ and $J$ be arbitrary index sets, possibly infinite.

\begin{lemma}
\label{lem:ce-product}
Let $(M_i)_{i \in I}$ be a tuple of monoids.
Then
$$
\CE\!\left(\prod_{i \in I} M_i\right)
= 
\prod_{i \in I} \CE(M_i).
$$
\end{lemma}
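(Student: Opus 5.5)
The plan is to prove the equality by double inclusion, working coordinatewise in $M=\prod_{i\in I}M_i$. Multiplication in $M$ is componentwise and the identity is $(1)_{i\in I}$. So an element $e=(e_i)_{i\in I}$ satisfies $e^2=e$ if and only if $e_i^2=e_i$ for every $i$. The only point needing care is centrality, where the quantifier ranges over all of $M$ on one side and over each $M_i$ separately on the other.

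For the inclusion $\supseteq$, take $e=(e_i)$ with each $e_i\in\CE(M_i)$. Then $e$ is idempotent coordinatewise. For any $x=(x_i)\in M$ we have $(ex)_i=e_ix_i=x_ie_i=(xe)_i$ for all $i$, so $ex=xe$ and $e\in\CE(M)$.

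For the inclusion $\subseteq$, take $e\in\CE(M)$. Idempotency gives $e_i^2=e_i$ for each $i$. To show $e_j$ is central in $M_j$, fix $j\in I$ and $y\in M_j$. Let $x\in M$ be the element with $x_j=y$ and $x_i=1$ for $i\neq j$; this element exists because every factor is a monoid, so no choice issues arise even for infinite $I$. Comparing the $j$-th coordinates of $ex=xe$ gives $e_jy=ye_j$. Hence $e_j\in\CE(M_j)$ for every $j$.

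There is no real obstacle. The only step requiring a construction is the embedding of a single coordinate via identities in the other coordinates, and it is available because the factors have identities.
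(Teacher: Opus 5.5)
Your proof is correct and takes the same approach as the paper. The paper's one-line proof just observes that idempotence and centrality are coordinatewise conditions, and your double inclusion, including testing centrality of $e_j$ against the element with $y$ in coordinate $j$ and $1$ elsewhere, spells out that observation in full.
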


\begin{proof}
Both idempotence and centrality in a direct product are coordinatewise conditions, so the result follows immediately.
\end{proof}

\begin{lemma}
\label{lem:cce-product}
Let the $M_i$ be monoids with zero for $i \in I$.
Then
$$
\CCE\!\left(\prod_{i \in I} M_i\right)
= 
\prod_{i \in I} \CCE(M_i).
$$
\end{lemma}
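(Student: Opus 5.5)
By \cref{lem:ce-product}, central idempotents of $M=\prod_{i\in I}M_i$ are exactly the tuples $e=(e_i)$ with $e_i\in\CE(M_i)$ for every $i$. The zero of $M$ is $(0_i)$ and the identity is $(1_i)$, and the order $e\le f$ in $\CE(M)$ holds exactly when $e_i\le f_i$ for every $i$. I would prove the two inclusions separately. The only thing to handle is the clause ``$1$ is the unique common upper bound''. Since $\CE(M)$ carries the product order, upper bounds are computed coordinatewise, but uniqueness must be transported between the product and the factors.

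For the inclusion $\supseteq$, take $e=(e_i)$ with each $e_i\in\CCE(M_i)$, and choose complements $f_i$ using the axiom of choice if $I$ is infinite. Put $f=(f_i)$. Then $f\in\CE(M)$ and $ef=0$ coordinatewise. Let $g=(g_i)\in\CE(M)$ be a common upper bound of $e$ and $f$. Then each $g_i\in\CE(M_i)$ is a common upper bound of $e_i$ and $f_i$, so $g_i=1_i$, and hence $g=1$. Since $1$ is always an upper bound, $e\in\CCE(M)$.

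For the inclusion $\subseteq$, let $e\in\CCE(M)$ with complement $f$. Then $e_if_i=0_i$ for each $i$, so it remains to show that $1_i$ is the unique common upper bound of $e_i$ and $f_i$ in $\CE(M_i)$. This is the step needing care. Suppose $h\in\CE(M_i)$ bounds both $e_i$ and $f_i$. Form $g\in\CE(M)$ with $g_i=h$ and $g_j=1_j$ for $j\neq i$; this is central idempotent by \cref{lem:ce-product}. Then $g$ is a common upper bound of $e$ and $f$, because the coordinates other than $i$ are trivially fine, as $1_j$ is the top of $\CE(M_j)$. Hence $g=1$, so $h=1_i$.

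The main obstacle, mild as it is, is this extension trick in the second inclusion. One must use that every coordinate can independently be set to $1_j$. This works because the product order is coordinatewise and $1_j$ is the greatest element of each $\CE(M_j)$. There is no need for $\CE$ to be a lattice, since we only compare upper bounds and never form joins. The case of empty $I$ is trivial, since then both sides are the one-element monoid.
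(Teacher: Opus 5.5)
Your proof is correct and takes essentially the same approach as the paper, which simply notes via \cref{lem:ce-product} that both $ef=0$ and the uniqueness of $1$ as a common upper bound are coordinatewise conditions. Your two inclusions make that observation explicit. For $\subseteq$ you pad $h$ with $1_j$ in the other coordinates, and for $\supseteq$ you choose complements coordinatewise; choice is genuinely used there for infinite $I$, since complements in $\CE(M_i)$ need not be unique.
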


\begin{proof}
By \cref{lem:ce-product}, $\CE(\prod_i M_i) = \prod_i\CE(M_i)$ with the coordinatewise order, and both the condition $ef = 0$ and the uniqueness of $1$ as a common upper bound are coordinatewise.
Hence $e = (e_i)$ is complemented if and only if each $e_i$ is complemented.
\end{proof}

We now turn to factor reconstruction.
The first observation is independent of any condition on complemented central idempotents.

Let
$$
M = \prod_{i \in I} M_i
$$
be a product of nontrivial monoids with zero.
For $i \in I$, we define the complements
$$
\varepsilon_i
= 
(1,\dotsc,1,\underset{i}{0},1,\dotsc,1)
= 
\left(\begin{cases}
    1   & j \neq i \\
    0   & j = i
\end{cases}\right)_{j \in I}
,\quad
\delta_i
=
\left(\begin{cases}
    0   & j \neq i \\
    1   & j = i
\end{cases}\right)_{j \in I}
$$
and let
$
\iota_i : M_i \longrightarrow M
$
and 
$
\pi_i : M \longrightarrow M_i
$
be the standard coordinate embedding and projection,
$$
\iota_i(x)
= 
(1,\dotsc,1,\underset{i}{x},1,\dotsc,1)
\qquad\text{and}\qquad
\pi_i(x) = x_i.
$$
\begin{definition}
Let $M$ be a monoid and $m\in M$.
The \emph{(right) multiplicative stabilizer} of $m$ is
$$
\Stab_M(m) = \{x\in M:mx = m\}.
$$
When \(m\) is central, the left and right multiplicative stabilizers coincide.
\end{definition}

\begin{lemma}
\label{lem:stabilizer-submonoid}
For every $m\in M$, the set $\Stab_M(m)$ is a submonoid of $M$.
\end{lemma}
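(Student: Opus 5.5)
The plan is to check directly the two submonoid axioms, namely that the identity lies in $\Stab_M(m)$ and that $\Stab_M(m)$ is closed under multiplication. No earlier result is needed beyond the monoid axioms themselves, and in particular centrality of $m$ plays no role. We work with the right stabilizer as defined.

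First, since $1$ is the identity of $M$, we have $m\cdot 1 = m$, so $1\in\Stab_M(m)$. Next, let $x,y\in\Stab_M(m)$, so that $mx = m$ and $my = m$. By associativity,
$$
m(xy) = (mx)y = my = m,
$$
hence $xy\in\Stab_M(m)$. Thus $\Stab_M(m)$ contains the identity and is closed under the multiplication of $M$, so it is a submonoid of $M$ with the same identity element.

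There is no real obstacle here. The only point deserving a remark is that the submonoid shares the identity of $M$, which the first step guarantees. The same argument applies verbatim to the left stabilizer $\{x\in M: xm = m\}$, using $(xy)m = x(ym) = xm = m$. Consequently the statement holds for either convention, and in particular in the central case where the two stabilizers coincide.
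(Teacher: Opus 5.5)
Your proof is correct and follows the paper's argument exactly: $1\in\Stab_M(m)$ because $m\cdot 1 = m$, and closure under multiplication follows from $m(xy) = (mx)y = my = m$ by associativity. Your remark about the left stabilizer is a valid extension, though the paper does not need it.
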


\begin{proof}
Clearly $1\in\Stab_M(m)$.
If $x,y\in\Stab_M(m)$, then
$$
 m(xy) = (mx)y = my = m,
$$
so $xy\in\Stab_M(m)$.
\end{proof}

Stabilizers behave coordinatewise under direct products:

\begin{lemma}
\label{lem:stabilizer-product}
Let
$
M = \prod_{i \in I} M_i$ and let $m = (m_i)_{i \in I} \in M$.
Then
$$
\Stab_M(m)
= 
\prod_{i \in I} \Stab_{M_i}(m_i).
$$
\end{lemma}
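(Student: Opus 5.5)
The plan is a direct double-inclusion argument, unwinding the definition of $\Stab_M(m)$ coordinatewise. The only structural fact needed is that multiplication in $M=\prod_{i\in I}M_i$ is componentwise: for $x=(x_i)_{i\in I}$ we have $mx=(m_ix_i)_{i\in I}$. Equality of two elements of a direct product means equality in every coordinate, and this holds for an arbitrary index set $I$, possibly infinite.

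\emph{Step 1.} Let $x=(x_i)_{i\in I}\in M$. Then the following are equivalent:
\[
x\in\Stab_M(m)
\iff mx=m
\iff (m_ix_i)_{i\in I}=(m_i)_{i\in I}
\iff m_ix_i=m_i\ \text{for all } i\in I.
\]

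\emph{Step 2.} The last condition says exactly that $x_i\in\Stab_{M_i}(m_i)$ for every $i$. Equivalently, $x\in\prod_{i\in I}\Stab_{M_i}(m_i)$, regarded as a subset of $M$.

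Reading the chain of equivalences in both directions gives both inclusions at once. Since each $\Stab_{M_i}(m_i)$ is a submonoid by \cref{lem:stabilizer-submonoid}, the product on the right is a submonoid of $M$, and the equality holds as submonoids and not merely as sets.

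There is no real obstacle. The only points to state carefully are that no finiteness of $I$ is used and no choice is involved, since an element of the product is just its family of coordinates. The argument is the same kind of coordinatewise verification as in \cref{lem:ce-product}.
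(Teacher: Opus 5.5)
Your proof is correct and is essentially the paper's argument: both reduce $mx=m$ to the coordinatewise conditions $m_ix_i=m_i$ for all $i\in I$. The extra remark about submonoids is harmless but not needed, since equality as subsets of $M$ already gives equality of the induced submonoids.
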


\begin{proof}
For $x = (x_i)_{i \in I} \in M$, one has
$
mx = m
$
if and only if
$
m_ix_i = m_i
$
for every $i$, which is equivalent to
$
x_i\in\Stab_{M_i}(m_i)
$
for every $i$.
\end{proof}

We now apply this observation to the coordinate idempotents.

\begin{proposition}
\label{prop:stabilizer-factor}
For every $i \in I$,
$$
\Stab_M(\varepsilon_i)
= 
\iota_i(M_i)
\cong
M_i
\qquad\text{and}\qquad
\Stab_M(\delta_i)
=
\pi_i^{-1}(\{1\}).
$$
\end{proposition}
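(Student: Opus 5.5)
The plan is to reduce both identities to \cref{lem:stabilizer-product}, which computes stabilizers coordinatewise. For this we only need the stabilizers of the two elements $0$ and $1$ inside a single monoid with zero $N$. Since $1\cdot x = x$, we have $\Stab_N(1) = \{1\}$. Since $0$ is absorbing, $0\cdot x = 0$ for all $x$, so $\Stab_N(0) = N$.

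First consider $\varepsilon_i$, whose $j$-th coordinate is $1$ for $j\neq i$ and $0$ for $j=i$. By \cref{lem:stabilizer-product},
$$
\Stab_M(\varepsilon_i) = \prod_{j\in I} S_j,
\qquad\text{where } S_i = \Stab_{M_i}(0) = M_i \text{ and } S_j = \Stab_{M_j}(1) = \{1\} \text{ for } j\neq i.
$$
This product is exactly the set of tuples that equal $1$ off coordinate $i$ and are arbitrary at coordinate $i$. That set is $\iota_i(M_i)$ by the definition of $\iota_i$.

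Next we need $\iota_i(M_i)\cong M_i$. The map $\iota_i$ is a monoid homomorphism, because multiplication in $M$ is coordinatewise and $1\cdot 1 = 1$ in the other coordinates. It is injective, because $\pi_i\circ\iota_i = \id_{M_i}$. Hence $\iota_i$ corestricts to an isomorphism $M_i\to\iota_i(M_i)$. It also sends $0$ to $\varepsilon_i$, which is the zero of the submonoid $\iota_i(M_i)$, so the isomorphism respects zeros as well.

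Finally consider $\delta_i$. The same computation with the roles of $0$ and $1$ swapped gives
$$
\Stab_M(\delta_i) = \prod_{j\in I} T_j,
\qquad\text{where } T_i = \{1\} \text{ and } T_j = M_j \text{ for } j\neq i.
$$
This is the set of $x\in M$ with $x_i = 1$, which is $\pi_i^{-1}(\{1\})$. No step is a real obstacle; the only care needed is in bookkeeping which coordinates carry $0$ and which carry $1$. Nontriviality of the factors is not needed for this statement.
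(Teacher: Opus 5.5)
Your proposal is correct and matches the paper's proof: both apply \cref{lem:stabilizer-product} with $\Stab(0)$ equal to the whole factor and $\Stab(1)=\{1\}$, and obtain the $\delta_i$ case by swapping the roles of $0$ and $1$. Your extra check that $\iota_i$ is an injective homomorphism (via $\pi_i\circ\iota_i=\id_{M_i}$) makes explicit the isomorphism $\iota_i(M_i)\cong M_i$, which the paper takes for granted.
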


\begin{proof}
By \cref{lem:stabilizer-product},
$$
\Stab_M(\varepsilon_i)
= 
\prod_{j \in I} \Stab_{M_j}((\varepsilon_i)_j).
$$
Since
$
\Stab_{M_i}(0) = M_i
$
and
$
\Stab_{M_j}(1) = \{1\}
$,
we obtain
\[
\Stab_M(\varepsilon_i)
= 
\prod_{j \in I} \left(\begin{cases}
    \{1\}   & j \neq i \\
    M_i     & j = i
\end{cases}\right)
= 
\iota_i(M_i)
\cong
M_i.
\]
The other equality is proved similarly.
\end{proof}

Conversely, the stabilizer determines the corresponding coordinate idempotent.

\begin{lemma}
\label{lem:stabilizer-detects-epsilon}
Let $e \in M$ be an idempotent.
\begin{enumerate}
    \item If
    $
    \Stab_M(e) = \iota_i(M_i)
    $
    for some $i$, then
    $
    e = \varepsilon_i.
    $
    \item If
    $
    \Stab_M(e) = \pi_i^{-1}(\{1\})
    $
    for some $i$, then
    $
    e = \delta_i.
    $
\end{enumerate}
\end{lemma}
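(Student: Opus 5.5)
Both parts follow from the coordinatewise description of stabilizers in \cref{lem:stabilizer-product} and the standing assumption that every $M_j$ is a nontrivial monoid with zero. Write $e=(e_j)_{j\in I}$. Since $e$ is idempotent, each $e_j$ is an idempotent of $M_j$. By \cref{lem:stabilizer-product},
$$
\Stab_M(e)=\prod_{j\in I}\Stab_{M_j}(e_j).
$$
So the hypothesis in either part amounts to prescribing each factor $\Stab_{M_j}(e_j)$. We must then show that this forces $e_j\in\{0,1\}$ in the required pattern.

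The key observation, for a single nontrivial monoid $N$ with zero and any $a\in N$, is the following.

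\begin{enumerate}
    \item $\Stab_N(a)=N$ if and only if $a=0$. Indeed, $0\in\Stab_N(a)$ gives $a=a\cdot 0=0$; conversely $\Stab_N(0)=N$.
    \item If $a$ is idempotent and $\Stab_N(a)=\{1\}$, then $a=1$. Indeed, $a\cdot a=a$ gives $a\in\Stab_N(a)=\{1\}$.
\end{enumerate}

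This is exactly where idempotence is used. It is also the only subtle point, since for non-idempotent $a$ the stabilizer can be $\{1\}$ with $a\neq 1$.

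For part (1), compare components. The product $\prod_j\Stab_{M_j}(e_j)$ equals $\iota_i(M_i)$, and each factor is nonempty because it contains $1$. Hence $\Stab_{M_i}(e_i)=M_i$ and $\Stab_{M_j}(e_j)=\{1\}$ for $j\neq i$. By the first observation, $e_i=0$. By the second, $e_j=1$ for $j\neq i$. Therefore $e=\varepsilon_i$.

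For part (2), the set $\pi_i^{-1}(\{1\})$ is the product with $\{1\}$ in coordinate $i$ and $M_j$ elsewhere. The same component comparison gives $\Stab_{M_i}(e_i)=\{1\}$, so $e_i=1$, and $\Stab_{M_j}(e_j)=M_j$ for $j\neq i$, so $e_j=0$. Therefore $e=\delta_i$.

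Nontriviality of the factors is not actually needed for the argument. It only guarantees that the conclusions $e_i=0$ and $e_j=1$ describe distinct elements, so $\varepsilon_i$ and $\delta_i$ are genuinely determined.
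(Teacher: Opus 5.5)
Your proposal is correct and follows essentially the paper's own argument: both describe $\Stab_M(e)$ coordinatewise via \cref{lem:stabilizer-product}, use $0\in\Stab_{M_i}(e_i)$ to force $e_i=0$, and use idempotence to force $e_j=1$ where the stabilizer is $\{1\}$. You also make explicit two steps the paper leaves implicit. The first is the componentwise comparison, which is justified because each factor contains $1$. The second is the full argument for part (2), which the paper only says is ``proved similarly''.
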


\begin{proof}
Let $e = (e_i)_{i \in I} \in M$.
By \cref{lem:stabilizer-product},
$$
\Stab_M(e)
= 
\prod_{j\in I} \Stab_{M_j}(e_j).
$$
\begin{enumerate}
    \item 
    Thus
    $$
    \Stab_{M_i}(e_i) = M_i
    \qquad\text{and}\qquad
    \Stab_{M_j}(e_j) = \{1\}
    \ \text{for}\ j\neq i.
    $$
    Since $0\in\Stab_{M_i}(e_i)$, one has
    $
    e_i0 = e_i,
    $
    and therefore $e_i = 0$.
    For $j\neq i$, idempotence gives
    $
    e_j\in\Stab_{M_j}(e_j) = \{1\},
    $
    so $e_j = 1$.
    Hence
    $
    e = \varepsilon_i.
    $
    \item This is proved similarly. \qedhere
\end{enumerate}
\end{proof}

The preceding results show that the coordinate factors and cofactors are always stabilizers of the corresponding coordinate idempotents.
The remaining question is when those idempotents can themselves be recognized from the monoid.
The condition on complemented central idempotents used below gives an exact answer.

Once $\varepsilon_i$ is known, its stabilizer recovers the $i$-th factor purely multiplicatively.
The role of centrality is instead to make the coordinate idempotents recognizable without reference to the chosen product presentation: central idempotents form an isomorphism-invariant meet-semilattice, and the complemented part of this semilattice singles out the coordinate directions under the hypothesis below.
Thus the zero--identity pair supplies the coordinate idempotents and their stabilizers recover the corresponding factors,
whereas centrality identifies the correct idempotents among the idempotents of the monoid.
The use of central data to encode product decompositions is classical; the specific coatom--stabilizer detector established below is the new multiplicative step.

Recall that a \emph{coatom} of a bounded partially ordered set is a maximal element strictly below its greatest element, and that an \emph{atom} of a bounded partially ordered set is a minimal element strictly above its least element.

\begin{proposition}
\label{prop:factor-reconstruction-characterization}
Let
$
M = \prod_{i\in I} M_i
$
be a product of nontrivial monoids with zero.
The following conditions are equivalent:
\begin{enumerate}[label = \textup{(\roman*)}]
\item
$
\CCE(M_i) = \{0,1\}
$
for every $i$;

\item the coatoms of $\CCE(M)$ are precisely
$
\varepsilon_i
$
for $i \in I$;

\item the coordinate factors are precisely the stabilizers of the coatoms of $\CCE(M)$, i.e.,
$$
\left\{
\iota_i(M_i) : i \in I
\right\}
= 
\left\{
\Stab_M(e):e\text{ is a coatom of }\CCE(M)
\right\}.
$$

\item the atoms of $\CCE(M)$ are precisely $\delta_i$ for $i \in I$;

\item the coordinate cofactors are precisely the stabilizers of the atoms of $\CCE(M)$, i.e.,
$$
\left\{
\pi_i^{-1}(\{1\}) : i \in I
\right\}
= 
\left\{
\Stab_M(e) : e\text{ is an atom of }\CCE(M)
\right\}.
$$
\end{enumerate}
\end{proposition}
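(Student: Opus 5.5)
By \cref{lem:cce-product}, $\CCE(M)=\prod_i\CCE(M_i)$ with the coordinatewise order. The plan is to read off the atoms and coatoms of this product poset, and then pass to stabilizers using \cref{prop:stabilizer-factor} and \cref{lem:stabilizer-detects-epsilon}.

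\emph{Key structural fact.} In a product of bounded posets $P=\prod_i P_i$, an element is a coatom exactly when one coordinate is a coatom of its factor and all other coordinates are top. Similarly, an element is an atom exactly when one coordinate is an atom and all others are bottom. To see this, suppose $e<1$ is maximal. Pick $i$ with $e_i<1$. Then $e\le \iota$-type element $(1,\dots,e_i,\dots,1)<1$, so maximality forces equality. Moreover $e_i$ must then be a coatom of $P_i$. Each factor $\CCE(M_i)$ has $0\neq 1$ since $M_i$ is nontrivial, and $\{0,1\}\subseteq\CCE(M_i)$.

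\emph{(i)$\Rightarrow$(ii),(iv).} If each $\CCE(M_i)=\{0,1\}$, the only coatom of each factor is $0$, so the coatoms of the product are exactly the $\varepsilon_i$. Dually, the atoms are exactly the $\delta_i$.

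\emph{(ii)$\Rightarrow$(i) and (iv)$\Rightarrow$(i).} Argue by contraposition. Suppose some $a\in\CCE(M_j)\setminus\{0,1\}$. The element $\varepsilon'=(1,\dots,a,\dots,1)$ lies in $\CCE(M)$ and satisfies $\varepsilon_j<\varepsilon'<1$, so $\varepsilon_j$ is not a coatom, contradicting (ii). Similarly, $0<\delta'<\delta_j$ with $\delta'=(0,\dots,a,\dots,0)$ contradicts (iv).

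\emph{(ii)$\Leftrightarrow$(iii) and (iv)$\Leftrightarrow$(v).} (ii)$\Rightarrow$(iii) is immediate from \cref{prop:stabilizer-factor}. For (iii)$\Rightarrow$(ii), take any coatom $e$. By (iii), $\Stab_M(e)=\iota_i(M_i)$ for some $i$, and $e$ is idempotent, so \cref{lem:stabilizer-detects-epsilon} gives $e=\varepsilon_i$. Hence every coatom is some $\varepsilon_i$. Conversely, each $\iota_i(M_i)$ is the stabilizer of some coatom $e$, and that coatom equals $\varepsilon_i$ by the same lemma. Note that $\iota_i(M_i)\neq\iota_j(M_j)$ for $i\ne j$ because the factors are nontrivial. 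So the set of coatoms is exactly $\{\varepsilon_i\}$. The same argument with $\delta_i$ and $\pi_i^{-1}(\{1\})$ handles (iv)$\Leftrightarrow$(v).

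\emph{Main obstacle.} The delicate point is the coatom/atom description of an arbitrary, possibly infinite, product poset. There we must confirm that the elements $(1,\dots,a,\dots,1)$ and $(0,\dots,a,\dots,0)$ used in the argument are genuinely in $\CCE(M)$. This holds by \cref{lem:cce-product}, which requires only that each coordinate be complemented. We must also check that no coatom can have two non-top coordinates. That case is ruled out by the chain argument above, which works for any index set.
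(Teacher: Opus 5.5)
Your proof is correct and uses the same ingredients as the paper: \cref{lem:cce-product} to identify $\CCE(M)$ with a product (here $\{0,1\}^I$), \cref{prop:stabilizer-factor} and \cref{lem:stabilizer-detects-epsilon} to pass between coatoms (resp.\ atoms) and stabilizers, and the sandwiches $\varepsilon_i<\iota_i(c)<1$ and $0<(0,\dots,c,\dots,0)<\delta_i$ for a nontrivial $c\in\CCE(M_i)$. The only difference is bookkeeping: you prove (i)$\Leftrightarrow$(ii)$\Leftrightarrow$(iii) and (i)$\Leftrightarrow$(iv)$\Leftrightarrow$(v) instead of the paper's cycles, so your (iii)$\Rightarrow$(ii) followed by (ii)$\Rightarrow$(i) splits the paper's single step (iii)$\Rightarrow$(i).
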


\begin{proof}
Assume \textup{(i)}.
By \cref{lem:cce-product},
$$
\CCE(M)
= 
\prod_{i \in I} \CCE(M_i)
= 
\{0,1\}^I.
$$
With the coordinatewise order, its coatoms are exactly the elements having a single zero coordinate, namely $\varepsilon_i$ for $i \in I$, and its atoms are exactly the elements having a single nonzero coordinate, namely $\delta_i$ for $i \in I$.
Thus \textup{(i)} implies \textup{(ii)} and \textup{(iv)}.

Assume \textup{(ii)}.
By \cref{prop:stabilizer-factor},
$
\Stab_M(\varepsilon_i) = \iota_i(M_i)
$
for every $i$.
Since the $\varepsilon_i$ are precisely the coatoms of $\CCE(M)$, condition \textup{(iii)} follows.

Assume \textup{(iii)}.
For every $i$, there is a coatom $e\in\CCE(M)$ such that
$
\Stab_M(e) = \iota_i(M_i)
$.
By \cref{lem:stabilizer-detects-epsilon}, necessarily
$
e = \varepsilon_i.
$
Hence every $\varepsilon_i$ is a coatom of $\CCE(M)$.

If some $M_i$ admitted an element
$
c\in\CCE(M_i)\setminus\{0,1\},
$
then \cref{lem:cce-product} would give
$
u = \iota_i(c) \in \CCE(M)
$.
Since $0<c<1$, one would have
$
\varepsilon_i<u<1,
$
contradicting the fact that $\varepsilon_i$ is a coatom.
Therefore
$
\CCE(M_i) = \{0,1\}
$
for every $i$, proving \textup{(i)} and completing the cycle \textup{(i)} $\implies$ \textup{(ii)} $\implies$ \textup{(iii)} $\implies$ \textup{(i)}.

The proof of \textup{(iv)} $\implies$ \textup{(v)} $\implies$ \textup{(i)} is similar: the implication \textup{(iv)} $\implies$ \textup{(v)} is exactly parallel to \textup{(ii)} $\implies$ \textup{(iii)}, using \cref{prop:stabilizer-factor}, while for \textup{(v)} $\implies$ \textup{(i)} we (similarly to \textup{(iii)} $\implies$ \textup{(i)}) show that every $\delta_i$ is an atom of $\CCE(M)$, and that if some $M_i$ admitted an element
$
c\in\CCE(M_i)\setminus\{0,1\}
$,
then the element
$$
u = \left(\begin{cases}
    0   & j \neq i  \\
    c   & j = i
\end{cases}\right)_{j\in I} \in \CCE(M)
$$
would satisfy
$
0 < u < \delta_i,
$
contradicting the fact that $\delta_i$ is an atom.
\end{proof}

\begin{remark}
The condition
$
\CCE(M_i) = \{0,1\}
$
is therefore precisely what makes the coordinate factors and cofactors recoverable by the coatom--stabilizer and atom-stabilizer constructions respectively:
the coordinate idempotents are exactly the coatoms and atoms of $\CCE(M)$, and their stabilizers are exactly the coordinate factors and cofactors.
This characterizes the reconstruction mechanism used in the rigidity theorem; it is not a characterization of all possible uniqueness phenomena for direct-product decompositions.
\end{remark}

We finish the section by recording that every ingredient in this reconstruction is preserved by isomorphisms.

\begin{lemma}
\label{lem:isomorphism-central}
Let $M$ and $N$ be monoids with zero and let
$
\Phi:M\xrightarrow{\sim}N
$
be a monoid isomorphism.
Then $\Phi$ restricts to an isomorphism of bounded partially ordered sets
$$
\Phi|_{\CE(M)}:
\CE(M)\xrightarrow{\sim}\CE(N),
$$
and
$$
\Phi(\CCE(M)) = \CCE(N).
$$
In particular, $\Phi$ induces an order isomorphism
$$
\CCE(M)\xrightarrow{\sim}\CCE(N)
$$
and therefore preserves coatoms and atoms.
\end{lemma}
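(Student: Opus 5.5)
The plan is to verify directly that each defining condition is expressed purely in terms of multiplication, the identity and the absorbing zero, all of which $\Phi$ preserves. First I would record the preliminary facts. A monoid isomorphism satisfies $\Phi(1)=1$ by definition. It also satisfies $\Phi(0)=0$: for every $y\in N$, writing $y=\Phi(x)$, we get $\Phi(0)y=\Phi(0x)=\Phi(0)$ and similarly $y\Phi(0)=\Phi(0)$. So $\Phi(0)$ is absorbing, and an absorbing element of a monoid is unique.

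Next I would show that $\Phi$ maps $\CE(M)$ bijectively onto $\CE(N)$. If $e^2=e$ and $ex=xe$ for all $x\in M$, then $\Phi(e)^2=\Phi(e)$. Moreover, for any $y=\Phi(x)\in N$ we have $\Phi(e)y=\Phi(ex)=\Phi(xe)=y\Phi(e)$, using surjectivity. Applying the same argument to $\Phi^{-1}$ gives the reverse inclusion, so $\Phi|_{\CE(M)}$ is a bijection onto $\CE(N)$. The order is preserved and reflected, because $ef=e$ holds if and only if $\Phi(e)\Phi(f)=\Phi(e)$, by injectivity. Together with $\Phi(0)=0$ and $\Phi(1)=1$, this gives an isomorphism of bounded posets.

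For complemented elements, let $e\in\CCE(M)$ with complement $f\in\CE(M)$. Then $\Phi(e)\Phi(f)=\Phi(ef)=\Phi(0)=0$. Now suppose $g\in\CE(N)$ is a common upper bound of $\Phi(e)$ and $\Phi(f)$. Then $\Phi^{-1}(g)\in\CE(M)$ is a common upper bound of $e$ and $f$, because the order is reflected. Hence $\Phi^{-1}(g)=1$, and so $g=1$. This shows $\Phi(e)\in\CCE(N)$ with complement $\Phi(f)$. The symmetric argument with $\Phi^{-1}$ gives $\Phi(\CCE(M))=\CCE(N)$.

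Finally, $\CCE(M)$ carries the order restricted from $\CE(M)$. The restriction of an order isomorphism of bounded posets to a subset that it maps onto the corresponding subset is again an order isomorphism. It fixes $0$ and $1$, since both lie in $\CCE$. Atoms and coatoms are defined purely order-theoretically relative to $0$ and $1$, so they are carried to atoms and coatoms. No step is a real obstacle. The only points needing care are that $\Phi(0)=0$ follows from surjectivity rather than being assumed, and that verifying "$1$ is the unique common upper bound" requires order \emph{reflection}, which is where the inverse isomorphism is used.
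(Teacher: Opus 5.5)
Your proposal is correct and follows essentially the same route as the paper's proof. Both arguments show that $\Phi$ preserves $1$ and (via surjectivity) the absorbing zero, check centrality through $y=\Phi(x)$ with the reverse inclusion obtained from $\Phi^{-1}$, and use the equivalence $ef=e \iff \Phi(e)\Phi(f)=\Phi(e)$ before transporting complementarity, atoms and coatoms. Your explicit appeal to order reflection, via $\Phi^{-1}$, to show that $1$ remains the unique common upper bound spells out what the paper compresses into ``it preserves $0$, $1$, and common upper bounds.''
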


\begin{proof}
A monoid isomorphism preserves $1$ and idempotents.
It also preserves the absorbing zero.
Indeed, for $y = \Phi(x)\in N$,
$$
\Phi(0)y
= 
\Phi(0x)
= 
\Phi(0),
\qquad
y\Phi(0)
= 
\Phi(x0)
= 
\Phi(0),
$$
so $\Phi(0)$ is the zero of $N$.

If $e\in\CE(M)$ and $y = \Phi(x)\in N$, then
$$
\Phi(e)y
= 
\Phi(ex)
= 
\Phi(xe)
= 
y\Phi(e),
$$
so $\Phi(e)\in\CE(N)$.
Applying the same argument to $\Phi^{-1}$ gives
$$
\Phi(\CE(M)) = \CE(N).
$$
Moreover,
$$
e\leq f
\iff
ef = e
\iff
\Phi(e)\Phi(f) = \Phi(e)
\iff
\Phi(e)\leq\Phi(f).
$$
Thus $\Phi$ induces an order isomorphism on central idempotents.
Since it preserves $0$, $1$, and common upper bounds, it preserves complementarity.
Hence
$$
\Phi(\CCE(M)) = \CCE(N).
$$
The final assertion follows because order isomorphisms preserve coatoms and atoms.
\end{proof}

\begin{lemma}
\label{lem:stabilizers-functorial}
Let
$
\Phi:M\xrightarrow{\sim}N
$
be a monoid isomorphism and let $m\in M$.
Then
$$
\Phi(\Stab_M(m))
= 
\Stab_N(\Phi(m)).
$$
\end{lemma}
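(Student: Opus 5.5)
The plan is to prove the two inclusions $\Phi(\Stab_M(m))\subseteq\Stab_N(\Phi(m))$ and $\Stab_N(\Phi(m))\subseteq\Phi(\Stab_M(m))$ directly from the definition of the stabilizer. We use only that $\Phi$ is a multiplicative bijection; neither centrality nor the zero plays any role here.

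For the first inclusion, take $x\in\Stab_M(m)$, so that $mx=m$. Applying $\Phi$ and using multiplicativity gives
$$
\Phi(m)\Phi(x)=\Phi(mx)=\Phi(m),
$$
hence $\Phi(x)\in\Stab_N(\Phi(m))$.

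For the reverse inclusion, apply the same argument to the inverse isomorphism $\Phi^{-1}:N\xrightarrow{\sim}M$ at the element $\Phi(m)$. This gives
$$
\Phi^{-1}\bigl(\Stab_N(\Phi(m))\bigr)\subseteq\Stab_M(m).
$$
Applying $\Phi$ to both sides yields $\Stab_N(\Phi(m))\subseteq\Phi(\Stab_M(m))$. Alternatively, one can argue directly: given $y\in\Stab_N(\Phi(m))$, write $y=\Phi(x)$ by surjectivity. Then $\Phi(mx)=\Phi(m)$, and injectivity gives $mx=m$.

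There is no real obstacle. The only point needing care is that the reverse inclusion requires bijectivity, either through the inverse map or through both surjectivity and injectivity; a mere homomorphism would give only the first inclusion.
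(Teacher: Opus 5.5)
Your proof is correct and is essentially the paper's argument. The paper proves the chain of equivalences $x\in\Stab_M(m)\iff mx=m\iff\Phi(m)\Phi(x)=\Phi(m)\iff\Phi(x)\in\Stab_N(\Phi(m))$ and then uses bijectivity of $\Phi$; your two inclusions, the second using surjectivity and injectivity, spell out the same reasoning.
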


\begin{proof}
For $x\in M$,
\begin{align*}
x\in\Stab_M(m)
&\iff
mx = m
\\
\iff
\Phi(m)\Phi(x) = \Phi(m)
&\iff
\Phi(x)\in\Stab_N(\Phi(m)).
\end{align*}
Since $\Phi$ is bijective, the result follows.
\end{proof}

Under the equivalent conditions of \cref{prop:factor-reconstruction-characterization}, the coordinate factors and cofactors are therefore determined from the product by an isomorphism-invariant construction: they are precisely the stabilizers of the coatoms and atoms of $\CCE(M)$ respectively.
Consequently, any isomorphism between two such products must permute these distinguished submonoids.
This is the structural input for the rigidity theorem of the next section.
\section{Rigidity and refinement of direct products}
\label{sec:rigidity}

We now apply the factor-reconstruction criterion of \cref{prop:factor-reconstruction-characterization}.
Abstract refinement theory already gives classical routes from suitable refinement hypotheses to uniqueness of irreducible direct-product factorizations; see \cite{ChangJonssonTarski,HoefnagelSRP}.
The point here is that we do not assume such a refinement property.
Instead, when the factors have no nontrivial complemented central idempotents, the multiplication itself determines the coordinate factors as the stabilizers of the coatoms of $\CCE$.
Since both coatoms and stabilizers are preserved by isomorphisms, product isomorphisms are forced to preserve the factor structure.

The resulting isomorphisms are the direct-product analogue of monomial matrices.

\begin{definition}
Let
$
M = \prod_{i \in I} M_i
$
and
$
N = \prod_{j \in J} N_j
$.
An isomorphism
$
\Phi : M \xrightarrow{\sim} N
$
is called \emph{monomial} if there exists a bijection
$
\sigma : I \to J
$
and isomorphisms
$
\phi_i : M_i\xrightarrow{\sim}N_{\sigma(i)}
$
such that
$
\Phi\circ\iota_i
= 
\iota'_{\sigma(i)}\circ\phi_i
$
and
$
\pi_{\sigma(i)}' \circ \Phi
=
\phi_i \circ \pi_i
$
for every $i \in I$,
where the $\iota_j'$ and $\pi_j'$ for $j \in J$ are the canonical embeddings and projections for $N$.
\end{definition}

\begin{remark}
If $I$ (and thus $J$) is finite, then having
$
\Phi\circ\iota_i
= 
\iota'_{\sigma(i)}\circ\phi_i
$
for every $i \in I$ is sufficient, since every \(x\in M\) admits the factorization
\[
x=\prod_{i\in I}\iota_i(\pi_i(x)).
\]
For infinite \(I\), this factorization is not available, and the projection identities must be established separately.
\end{remark}

We can now state the main rigidity theorem.

\begin{theorem}
\label{thm:rigidity}
Let
$
M = \prod_{i \in I} M_i
$
and
$
N = \prod_{j \in J} N_j
$
where all $M_i$ and $N_j$ are nontrivial monoids with zero satisfying
$
\CCE(M_i) = \CCE(N_j) = \{0,1\}.
$
Then every monoid isomorphism
$
\Phi : M\xrightarrow{\sim}N
$
is monomial.
More precisely, there exists a unique bijection
$
\sigma : I \to J
$
and unique isomorphisms
$$
\phi_i:M_i\xrightarrow{\sim}N_{\sigma(i)}
$$
such that
$$
\Phi\circ\iota_i
= 
\iota'_{\sigma(i)}\circ\phi_i
\qquad\text{and}\qquad
\pi_{\sigma(i)}' \circ \Phi
=
\phi_i \circ \pi_i
$$
for every $i$.
\end{theorem}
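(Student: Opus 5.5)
The plan is to transport the coatom--stabilizer detector of \cref{prop:factor-reconstruction-characterization} along $\Phi$. By that proposition, the coatoms of $\CCE(M)$ are exactly the $\varepsilon_i$ and the coatoms of $\CCE(N)$ are exactly the $\varepsilon'_j$. \Cref{lem:isomorphism-central} says $\Phi$ maps coatoms to coatoms bijectively. So there is a unique bijection $\sigma:I\to J$ with $\Phi(\varepsilon_i)=\varepsilon'_{\sigma(i)}$. Similarly, atoms go to atoms. By uniqueness of complements in $\{0,1\}^J$, we should also get $\Phi(\delta_i)=\delta'_{\sigma(i)}$. Concretely, $\delta_i$ is the unique central idempotent $f$ with $\varepsilon_i f=0$ and $\varepsilon_i\vee f=1$ in the Boolean algebra $\{0,1\}^I$, and $\Phi$ preserves all of this data.

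Next, \cref{lem:stabilizers-functorial} and \cref{prop:stabilizer-factor} give
\[
\Phi(\iota_i(M_i))=\Phi(\Stab_M(\varepsilon_i))=\Stab_N(\varepsilon'_{\sigma(i)})=\iota'_{\sigma(i)}(N_{\sigma(i)}).
\]
Define $\phi_i=\pi'_{\sigma(i)}\circ\Phi\circ\iota_i$. This is an injective homomorphism onto $N_{\sigma(i)}$, since $\iota'_{\sigma(i)}$ is an isomorphism onto its image with inverse $\pi'_{\sigma(i)}$. So $\phi_i$ is an isomorphism and $\Phi\circ\iota_i=\iota'_{\sigma(i)}\circ\phi_i$. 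Uniqueness of $\phi_i$ given $\sigma$ is immediate from injectivity of $\iota'_{\sigma(i)}$. Uniqueness of $\sigma$ holds because $\Phi\circ\iota_i$ determines its image, and distinct $j$ give distinct images $\iota'_j(N_j)$, since these are nontrivial and meet only in $\{1\}$.

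The main obstacle is the projection identity $\pi'_{\sigma(i)}\circ\Phi=\phi_i\circ\pi_i$ for infinite $I$, where $x$ is not a finite product of its coordinates. The key is the element $\delta_i$. For any $x\in M$ we have $x\delta_i=\iota_i(x_i)\delta_i$, since both sides have $i$-th coordinate $x_i$ and all other coordinates $0$. Applying $\Phi$ gives
\[
\Phi(x)\,\delta'_{\sigma(i)}=\iota'_{\sigma(i)}(\phi_i(x_i))\,\delta'_{\sigma(i)}.
\]
Reading off the $\sigma(i)$-th coordinate, and using that $\delta'_{\sigma(i)}$ has $1$ there, yields $\pi'_{\sigma(i)}(\Phi(x))=\phi_i(\pi_i(x))$. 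Hence the step that genuinely needs care is establishing $\Phi(\delta_i)=\delta'_{\sigma(i)}$ in a way compatible with $\sigma$. I would prove it from complementation: $\Phi(\delta_i)$ is a central idempotent with $\Phi(\delta_i)\varepsilon'_{\sigma(i)}=0$, which forces $\Phi(\delta_i)\le\delta'_{\sigma(i)}$. Also, $1$ is the only common upper bound of $\Phi(\delta_i)$ and $\varepsilon'_{\sigma(i)}$ in $\CE(N)$, which forces the $\sigma(i)$-coordinate of $\Phi(\delta_i)$ to be $1$. Since $\Phi(\delta_i)\in\CCE(N)=\{0,1\}^J$, these two facts give equality.
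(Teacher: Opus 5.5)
Your proposal is correct and follows essentially the same route as the paper. The coatoms of $\CCE$ give $\sigma$, uniqueness of complements in $\{0,1\}^J$ gives $\Phi(\delta_i)=\delta'_{\sigma(i)}$, and stabilizer functoriality gives $\phi_i$; multiplication by $\delta_i$ then yields the projection identity for arbitrary $I$, and your coordinatewise verification of $\Phi(\delta_i)=\delta'_{\sigma(i)}$ and your uniqueness argument for $\sigma$ (the distinct nontrivial axes $\iota'_j(N_j)$ meet only in $\{1\}$) spell out points the paper only asserts.
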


\begin{proof}
By \cref{prop:factor-reconstruction-characterization}, the atoms and coatoms of $\CCE(M)$ are precisely $\delta_i$ and $\varepsilon_i$ for $i \in I$ and those of $\CCE(N)$ are precisely $\delta_j'$ and $\varepsilon'_j$ for $j \in J$.
By \cref{lem:isomorphism-central}, $\Phi$ induces an order isomorphism
$$
\CCE(M)\xrightarrow{\sim}\CCE(N),
$$
and therefore a bijection between their coatoms, and thus a bijection $\sigma : I \to J$ such that
$$
\Phi(\varepsilon_i) = \varepsilon'_{\sigma(i)}
$$
for every $i \in I$.
Since $\delta_i$ is the complement of $\varepsilon_i$ and $\Phi$ preserves the order, the absorbing $0$, and the identity $1$, the element $\Phi(\delta_i)$ is a complement of \( \Phi(\varepsilon_i) \).
But
\[
\CCE(N)=\{0,1\}^{J},
\]
in which the complement of $\varepsilon'_{\sigma(i)}$ is uniquely $\delta'_{\sigma(i)}$.
Hence
\[
\Phi(\delta_i)=\delta'_{\sigma(i)}.
\]
By \cref{lem:stabilizers-functorial},
$$
\Phi\bigl(\Stab_M(\varepsilon_i)\bigr)
= 
\Stab_N\bigl(\varepsilon'_{\sigma(i)}\bigr)
\qquad\text{and}\qquad
\Phi\bigl(\Stab_M(\delta_i)\bigr)
=
\Stab_N\bigl(\delta_{\sigma(i)}'\bigr).
$$
Using \cref{prop:stabilizer-factor}, this becomes
$$
\Phi\bigl(\iota_i(M_i)\bigr)
= 
\iota'_{\sigma(i)}(N_{\sigma(i)})
\qquad\text{and}\qquad
\Phi\bigl(\pi_i^{-1}(\{1\})\bigr)
=
\pi_{\sigma(i)}'^{-1}(\{1\}).
$$
Thus
$$
\phi_i
= 
(\iota'_{\sigma(i)})^{-1}\circ\Phi\circ\iota_i
:
M_i\xrightarrow{\sim}N_{\sigma(i)}
$$
is a well defined isomorphism.
Both $\sigma$ and the $\phi_i$ are uniquely determined by $\Phi$;
in fact
$$
\iota_{\sigma(i)}' \circ \phi_i
=
\Phi \circ \iota_i
\implies
\phi_i
=
\pi_{\sigma(i)}' \circ \iota_{\sigma(i)}' \circ \phi_i
=
\pi_{\sigma(i)}' \circ \Phi \circ \iota_i
$$
for each $i \in I$.

Now for \(x=(x_j)_{j\in I}\in M\), coordinatewise multiplication gives
\[
x\delta_i
=
\delta_i\iota_i(x_i).
\]
Applying \(\Phi\), we obtain
\[
\Phi(x)\delta'_{\sigma(i)}
=
\Phi(x\delta_i)
=
\Phi\bigl(\delta_i\iota_i(x_i)\bigr)
=
\delta'_{\sigma(i)} \iota'_{\sigma(i)}\bigl(\phi_i(x_i)\bigr).
\]
Taking the \(\sigma(i)\)-th coordinate gives
\[
\pi'_{\sigma(i)}(\Phi(x))
=
\phi_i(x_i)
=
\phi_i(\pi_i(x)).
\]
Hence
\(
\pi'_{\sigma(i)}\circ\Phi
=
\phi_i\circ\pi_i
\)
for every \(i\in I\), as required.
\end{proof}

\begin{remark}
\label{rem:unique-product}
In particular, among products satisfying the hypotheses of \cref{thm:rigidity}, the family of isomorphism types of the coordinate factors is determined up to bijective reindexing by the product.More precisely, if
\[
\prod_{i\in I}M_i
\cong
\prod_{j\in J}N_j
\]
and all factors satisfy the hypotheses of \cref{thm:rigidity}, then there exists a bijection
$
\sigma:I\xrightarrow{\sim}J
$
such that
$
M_i\cong N_{\sigma(i)}
$
for every $i\in I$.
\end{remark}

 The following wreath-product formula is a standard consequence once monomial rigidity has been established; it is included to make the automorphism group explicit, especially when isomorphic factors occur with multiplicity. 


\begin{corollary}
\label{cor:wreath}
Suppose
\[
M\cong\prod_{\lambda\in\Lambda}P_\lambda^{I_\lambda},
\]
where the $P_\lambda$ are pairwise nonisomorphic nontrivial monoids with zero satisfying
$
\CCE(P_\lambda)
=
\{0,1\}
$,
and each $I_\lambda$ is a nonempty set.
Then
\[
\Aut(M)
\cong
\prod_{\lambda\in\Lambda}
\left(
\Aut(P_\lambda)^{I_\lambda}
\rtimes
\Sym(I_\lambda)
\right),
\]
where $\Sym(I_\lambda)$ acts by permuting the coordinates.
\end{corollary}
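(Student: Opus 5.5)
Since conjugating by an isomorphism $M\cong\prod_{\lambda}P_\lambda^{I_\lambda}$ identifies automorphism groups, we may assume $M=\prod_{\lambda\in\Lambda}P_\lambda^{I_\lambda}$. Set $I=\bigsqcup_\lambda I_\lambda$, so that $M=\prod_{i\in I}M_i$ with $M_i=P_\lambda$ for $i\in I_\lambda$. The plan is to build a homomorphism from the right-hand side into $\Aut(M)$ and then use \cref{thm:rigidity}, with $N=M$, to show it is bijective.

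First we write down the group $G=\prod_\lambda(\Aut(P_\lambda)^{I_\lambda}\rtimes\Sym(I_\lambda))$ and its action on $M$. Take an element $((\alpha_{\lambda,i})_{i},\tau_\lambda)_\lambda$. It sends $x=(x_i)_{i\in I}$ to the element whose coordinate at $\tau_\lambda(i)$ is $\alpha_{\lambda,i}(x_i)$, for $i\in I_\lambda$. This map is a monoid automorphism, because multiplication is coordinatewise and each $\alpha_{\lambda,i}$ is an automorphism. It is clearly monomial, with $\sigma$ equal to the bijection of $I$ induced by the $\tau_\lambda$ and with $\phi_i=\alpha_{\lambda,i}$. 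A routine check shows that composition corresponds to the semidirect product law, where $\Sym(I_\lambda)$ acts on $\Aut(P_\lambda)^{I_\lambda}$ by permuting indices. We fix the convention $(\tau\cdot\alpha)_{\tau(i)}=\alpha_i$ so that the map $\Theta:G\to\Aut(M)$ is a homomorphism. Injectivity is immediate: if the image is the identity, then $\Phi\circ\iota_i=\iota_i$ forces $\tau=\id$, and then every $\alpha_{\lambda,i}$ is the identity.

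Surjectivity is the key step. Let $\Phi\in\Aut(M)$. By \cref{thm:rigidity}, which applies because each $\CCE(M_i)=\{0,1\}$, there are a bijection $\sigma:I\to I$ and isomorphisms $\phi_i:M_i\to M_{\sigma(i)}$ satisfying $\pi_{\sigma(i)}\circ\Phi=\phi_i\circ\pi_i$. For $i\in I_\lambda$ we have $P_\lambda=M_i\cong M_{\sigma(i)}=P_\mu$ with $\sigma(i)\in I_\mu$. The $P_\lambda$ are pairwise nonisomorphic, so $\mu=\lambda$. Hence $\sigma$ preserves every block $I_\lambda$ and restricts to permutations $\tau_\lambda\in\Sym(I_\lambda)$. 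Each $\phi_i$ is then an automorphism of $P_\lambda$. The projection identities determine every coordinate of $\Phi(x)$ as $\Phi(x)_{\sigma(i)}=\phi_i(x_i)$. This is exactly the image under $\Theta$ of $((\phi_i),\tau_\lambda)_\lambda$, so $\Phi$ lies in the image.

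The main obstacle is not conceptual. Two points need care. First, we must use the projection identities of \cref{thm:rigidity}, not only $\Phi\circ\iota_i=\iota_{\sigma(i)}\circ\phi_i$, because $I_\lambda$ or $\Lambda$ may be infinite; the theorem was stated with exactly this in mind. Second, the semidirect product conventions must be chosen consistently so that $\Theta$ is a homomorphism and not an antihomomorphism. If the natural convention gives an antihomomorphism, we compose with inversion, or equivalently let $\Sym(I_\lambda)$ act through $\tau^{-1}$.
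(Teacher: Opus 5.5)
Your proposal is correct and takes essentially the same route as the paper. You apply \cref{thm:rigidity} with $N=M$, use the pairwise nonisomorphism of the $P_\lambda$ to confine the induced bijection to each block $I_\lambda$, and identify the resulting data with the wreath-product family. Your explicit construction of $\Theta$, the injectivity check, the use of the projection identities to cover infinite index sets, and the care with semidirect-product conventions make the paper's argument more explicit but do not change it.
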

\begin{proof}
Write the coordinate set of \(M\) as the disjoint union $\bigsqcup_{\lambda\in\Lambda} I_\lambda$, where the coordinates indexed by \(I_\lambda\) all have factor $P_\lambda$.
By \cref{thm:rigidity}, every automorphism of \(M\) induces a bijection of the coordinate set and acts on each coordinate by an isomorphism of the corresponding factors.
Since the \(P_\lambda\) are pairwise nonisomorphic, a coordinate indexed by \(I_\lambda\) can only be sent to another coordinate indexed by \(I_\lambda\).
Hence the induced bijection restricts, for every \(\lambda\), to a permutation $\sigma_\lambda\in\Sym(I_\lambda)$.
Moreover, for each \(i\in I_\lambda\), the corresponding factor isomorphism is an automorphism $\phi_{\lambda,i}\in\Aut(P_\lambda)$.
Thus an automorphism of \(M\) is uniquely determined by the family
$$
\bigl((\phi_{\lambda,i})_{i\in I_\lambda},
      \sigma_\lambda\bigr)_{\lambda\in\Lambda}.
$$
Conversely, any such family defines an automorphism of \(M\) by permuting the coordinates in each \(I_\lambda\) according to \(\sigma_\lambda\) and applying \(\phi_{\lambda,i}\) coordinatewise.
Composition gives the natural semidirect-product structure, with \(\Sym(I_\lambda)\) acting on \(\Aut(P_\lambda)^{I_\lambda}\) by permuting the coordinates.
Therefore
\[
\Aut(M)
\cong
\prod_{\lambda\in\Lambda}
\left(
\Aut(P_\lambda)^{I_\lambda}
\rtimes
\Sym(I_\lambda)
\right).
\qedhere
\]
\end{proof}

\begin{remark}
\label{rem:special-wreath-cases}
Two useful special cases are worth recording.
If the $M_i$ are pairwise nonisomorphic, then
$$
\Aut\left(\prod_{i\in I} M_i\right)
\cong
\prod_{i \in I} \Aut(M_i).
$$
At the opposite extreme, for a nontrivial monoid $P$ with zero satisfying
$
\CCE(P) = \{0,1\},
$
one has
$$
\Aut(P^I)
\cong
\Aut(P)^I \rtimes \Sym(I),
$$
where $\Sym(I)$ denotes the symmetric group permuting the elements of $I$.
\end{remark}

\begin{example}
\label{ex:free-monoid-zero}
Let $X$ be a nonempty set, $X^*$ be the free monoid on $X$, and
$$
(X^*)^0 = X^*\sqcup\{0\}
$$
be obtained by adjoining an absorbing zero.
The only idempotent of $X^*$ is the identity: if $w^2 = w$, then comparison of word lengths gives
$$
2|w| = |w|,
$$
and hence $w = 1$.
Therefore
$$
\CE((X^*)^0) = \CCE((X^*)^0) = \{0,1\}.
$$
Consequently,
$$
\Aut\!\left(((X^*)^0)^n\right)
\cong
\Aut((X^*)^0)^n \rtimes S_n,
$$
where $S_n$ denotes the symmetric group on $n$ elements.

If $X$ is finite of cardinality $d$, every automorphism of $X^*$ permutes the free generators:
$$
\Aut((X^*)^0)\cong S_d.
$$
Thus
$$
\Aut\!\left(((X^*)^0)^n\right)
\cong
(S_d)^n \rtimes S_n.
$$
\end{example}

\subsection{Relation with strict refinement}
\label{subsec:strict-refinement}

The factor-reconstruction argument also controls arbitrary product decompositions.
This places the preceding rigidity theorem in the classical theory of refinement of direct products.
Strict refinement itself is not new: it goes back to classical refinement theory, and it has been studied through Boolean factor congruences, factorable congruences, and categorical coextensivity; see
\cite{ChangJonssonTarski,WillardBFC,IskanderSRP,HoefnagelSRP}.
Our contribution in this subsection is the stronger concrete 
\cref{prop:canonical-refinement}: for the monoids considered here, every product decomposition is explicitly obtained by grouping the original factors.
The strict-refinement corollary then follows by intersecting the corresponding blocks.

We recall the relevant notion in the form needed here.
A family of homomorphisms
$$
(f_i : M\longrightarrow A_i)_{i \in I}
$$
is a \emph{product decomposition} if the induced map
$$
(f_i)_{i \in I} : M \xrightarrow{\sim} \prod_{i \in I} A_i
$$
is an isomorphism.
\begin{definition}
\label{def:finite-strict-refinement}
A monoid $M$ has the \emph{strict refinement property} if, for every pair of product decompositions
$$
(f_i : M\longrightarrow A_i)_{i \in I},
\qquad
(g_j : M\longrightarrow B_j)_{j\in J},
$$
there exist monoids $C_{ij}$ and homomorphisms
$$
\alpha_{ij} : A_i\longrightarrow C_{ij},
\qquad
\beta_{ij} : B_j\longrightarrow C_{ij}
$$
such that
$$
\alpha_{ij} f_i = \beta_{ij} g_j
$$
for every $(i,j)\in I\times J$, and such that the induced maps
$$
A_i\longrightarrow\prod_{j\in J} C_{ij},
\qquad
B_j\longrightarrow\prod_{i\in I} C_{ij}
$$
are isomorphisms for every $i$ and $j$.
\end{definition}

In other words, $M$ has the strict refinement property if any two product decompositions admit a common rectangular refinement.

Let again
$
M = \prod_{i \in I} M_i.
$
For $S \subseteq I$, put
$$
M_S = \prod_{i\in S} M_i,
$$
with $M_\varnothing$ the trivial monoid, let
$$
\pi_S : M\longrightarrow M_S
\qquad\text{and}\qquad
\iota_S : M_S\longrightarrow M
$$
be the coordinate projection and embedding respectively, and let
$$
\varepsilon_S
=
(e_{i,S})_{i \in I},
\ 
e_{i,S}
=
\begin{cases}
0,  & i\in S \\
1,  & i\notin S
\end{cases}
\quad\text{and}\quad
\delta_S
=
(d_{i,S})_{i \in I},
\ 
d_{i,S}
=
\begin{cases}
1,  & i\in S \\
0,  & i\notin S
\end{cases}.
$$
Note that $\varepsilon_S$ and $\delta_S$ are complemented for each $S \subseteq I$.
By \cref{lem:stabilizer-product},
$$
\Stab_M(\varepsilon_S)
=
\iota_S(M_S)
\qquad\text{and}\qquad
\Stab_M(\delta_S)
=
\pi_S^{-1}(\{1\}).
$$

The following result gives a concrete description of all product decompositions.

\begin{proposition}
\label{prop:canonical-refinement}
Let
$$
M = \prod_{i \in I} M_i,
$$
where the $M_i$ are nontrivial monoids with zero satisfying
$
\CCE(M_i) = \{0,1\}
$
for each $i$.
Let
$$
F = (f_j)_{j \in J} :
M\xrightarrow{\sim}
\prod_{j\in J} A_j
$$
be a product decomposition with the $A_j$ nontrivial.
Then there is a unique partition
$$
I
= 
\bigsqcup_{j\in J} S_j
$$
and isomorphisms
$$
\theta_j : A_j \xrightarrow{\sim} M_{S_j}
$$
such that
$$
\theta_j f_j = \pi_{S_j}
$$
for every $j \in J$.

Thus every product decomposition of $M$ is obtained by grouping the original coordinate factors.
\end{proposition}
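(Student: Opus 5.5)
The plan is to transport the coordinate idempotents of $\prod_{j\in J}A_j$ back to $M$ through $F$ and read them off in $\CCE(M)=\{0,1\}^I$. For each $j\in J$ let $\delta'_j\in\prod_{j}A_j$ denote the element with $j$-th coordinate $1$ and all other coordinates $0$. Here each $A_j$ has an absorbing zero, namely $f_j(0)$, because $f_j$ is surjective; this is the same computation as in \cref{lem:isomorphism-central}. Hence $\delta'_j$ is a complemented central idempotent of $\prod_j A_j$, by \cref{lem:cce-product} applied with the complementary pair $\{0,1\}$ in each factor. By \cref{lem:isomorphism-central}, $e_j:=F^{-1}(\delta'_j)\in\CCE(M)$. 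By \cref{lem:cce-product} and the hypothesis $\CCE(M_i)=\{0,1\}$, we have $\CCE(M)=\{0,1\}^I$. So $e_j=\delta_{S_j}$ for a unique $S_j\subseteq I$.

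Next I will check that the $S_j$ form a partition of $I$.
\begin{enumerate}
\item Since $A_j$ is nontrivial, $\delta'_j\neq 0$, hence $\delta_{S_j}\neq 0$ and $S_j\neq\varnothing$.
\item For $j\neq k$ we have $\delta'_j\delta'_k=0$, hence $\delta_{S_j}\delta_{S_k}=\delta_{S_j\cap S_k}=0$, so the sets are disjoint.
\item For covering, suppose $i\in I$ lies in no $S_j$. Then $\delta_i\delta_{S_j}=0$ for every $j$. Applying $F$ gives $F(\delta_i)\delta'_j=0$, so every coordinate of $F(\delta_i)$ vanishes. Thus $F(\delta_i)=0$ and $\delta_i=0$, which is absurd since $M_i$ is nontrivial.
\end{enumerate}
This argument works coordinate by coordinate and so does not need $I$ or $J$ to be finite.

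To build $\theta_j$, I will show that $f_j(x)=f_j(y)$ if and only if $\pi_{S_j}(x)=\pi_{S_j}(y)$. The point is that $f_j(x)$ is exactly the information carried by $F(x)\delta'_j$: this element has $j$-th coordinate $f_j(x)$ and zeros elsewhere. Hence
\[
f_j(x)=f_j(y)\iff F(x)\delta'_j=F(y)\delta'_j\iff F(x\delta_{S_j})=F(y\delta_{S_j})\iff x\delta_{S_j}=y\delta_{S_j}.
\]
The last condition is equivalent to $\pi_{S_j}(x)=\pi_{S_j}(y)$. Since $f_j$ and $\pi_{S_j}$ are surjective homomorphisms with the same kernel relation, $\theta_j(f_j(x)):=\pi_{S_j}(x)$ is a well-defined bijective homomorphism $A_j\to M_{S_j}$ satisfying $\theta_jf_j=\pi_{S_j}$.

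For uniqueness, suppose another partition $(T_j)_j$ with isomorphisms $\theta'_j$ works. Then $\pi_{S_j}$ and $\pi_{T_j}$ induce the same equivalence relation on $M$, namely that of $f_j$. Because every $M_i$ is nontrivial, this relation determines the index set: $i\in S_j$ exactly when changing only the $i$-th coordinate can change the class. Hence $S_j=T_j$.

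I expect the main obstacle to be this kernel comparison. It depends on the observation that multiplying by $\delta'_j$ kills all other coordinates without losing $f_j(x)$. Care is also needed to justify that each $A_j$ has a zero, so that $\delta'_j$ is defined and lies in $\CCE$.
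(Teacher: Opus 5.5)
Your proof is correct. It shares the paper's skeleton: pull the coordinate idempotents of $A=\prod_j A_j$ back through $F^{-1}$, read them off in $\CCE(M)=\{0,1\}^I$, and get disjointness from $\delta'_j\delta'_k=0$. The key step, however, uses a different mechanism.

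The paper works with the coatoms $\epsilon_j$. Writing $\jmath_j,\varpi_j$ for the coordinate embedding and projection of $A$, it uses functoriality of stabilizers to get $F^{-1}(\jmath_j(A_j))=\iota_{S_j}(M_{S_j})$ and defines $\theta_j=\pi_{S_j}\circ F^{-1}\circ\jmath_j$. It then needs a separate argument to verify $\theta_jf_j=\pi_{S_j}$. That argument factors $\id_A$ pointwise as the map resetting the $j$-th coordinate to $1$ times $\jmath_j\circ\varpi_j$, and uses $\Stab_A(\delta'_j)=\varpi_j^{-1}(\{1\})$. Its covering argument also goes through stabilizers, starting from $F(\iota_i(0))\neq 1_A$.

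You use only the atoms. Your observation is that multiplication by $\delta'_j$, resp.\ $\delta_{S_j}$, isolates $f_j(x)$, resp.\ $\pi_{S_j}(x)$. Hence the two surjections have the same kernel relation, and $\theta_j$ is induced with $\theta_jf_j=\pi_{S_j}$ holding by construction. This is the device $x\delta_i=\delta_i\iota_i(x_i)$ that the paper uses only at the end of the proof of \cref{thm:rigidity}.

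What your route buys: it is shorter, and it shows stabilizers are not actually needed for this proposition. Your covering argument ($F(\delta_i)=0$) is also more direct. Your uniqueness argument compares the kernels of $\pi_{S_j}$ and $\pi_{T_j}$, so it handles an arbitrary competing pair $(T_j,\theta'_j)$ directly. By contrast, the paper's remark that uniqueness ``follows from the uniqueness of $e_j$'' leaves implicit why a competing partition must satisfy $F(\varepsilon_{T_j})=\epsilon_j$.

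What the paper's route buys: an explicit formula for $\theta_j$. It also shows that $F^{-1}$ carries the embedded factor $\jmath_j(A_j)$ onto $\iota_{S_j}(M_{S_j})$ and the cofactor $\varpi_j^{-1}(\{1\})$ onto $\pi_{S_j}^{-1}(\{1\})$. This keeps the argument inside the coatom--stabilizer framework the paper is built around.
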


\begin{proof}

We denote the product $\prod_{j \in J} A_j$ by $A$.
Since $M$ has an absorbing zero and $F$ is an isomorphism, $A$ has absorbing zero $F(0)$.
Write
$
F(0) = (0_j)_{j\in J}.
$
For every $j\in J$, the element $0_j$ is an absorbing zero of $A_j$.
Indeed, for $x\in A_j$, multiplying $F(0)$ on either side by the tuple which is $x$ in the $j$-th coordinate and $1$ elsewhere gives
$
0_j x = 0_j = x 0_j.
$
Thus every $A_j$ has an absorbing zero.

For $j \in J$, let
$$
\epsilon_j
=
(1,\dotsc,1,\underset{j}{0_j},1,\dotsc,1)
\in
A
\qquad\text{and}\qquad
\delta_j
=
(0,\dotsc,0,\underset{j}{1_j},0,\dotsc,0)
\in
A.
$$
Since $\epsilon_j$ is a complemented central idempotent,
$$
e_j \coloneqq F^{-1}(\epsilon_j)
$$
belongs to $\CCE(M)$.
By \cref{lem:cce-product},
$
\CCE(M) = \{0,1\}^I,
$
so there is a unique subset
$
S_j \subseteq I
$
such that
$
e_j = \varepsilon_{S_j}
$.

Let
$$
\jmath_j : A_j \longrightarrow A
\qquad\text{and}\qquad
\varpi_j : A \to A_j
$$
be the coordinate embedding and projection respectively.
Since
$$
\jmath_j(A_j) = \Stab(\epsilon_j),
$$
functoriality of stabilizers (\cref{lem:stabilizers-functorial}) gives
$$
F^{-1}\bigl(\jmath_j(A_j)\bigr)
= 
\Stab_M(e_j)
= 
\Stab_M(\varepsilon_{S_j})
= 
\iota_{S_j}(M_{S_j}).
$$
Hence
$
F^{-1}\jmath_j
$
induces an isomorphism
$$
\theta_j : A_j \xrightarrow{\sim} M_{S_j}
$$
such that
$
\iota_{S_j} \circ \theta_j = F^{-1} \circ \jmath_j
$,
or equivalently
$$
\theta_j = \pi_{S_j} \circ F^{-1} \circ \jmath_j
$$
since $\pi_{S_j} \circ \iota_{S_j}$ is the identity.

Note that $F^{-1}(\delta_j) = \delta_{S_j}$ is the complement of $\varepsilon_{S_j}$ in $\{0,1\}^I$, and hence is $1$ precisely on the coordinates in $S_j$.
Now we can write the identity morphism pointwise as
$
\id_A = g_j \cdot (\jmath_j \circ \varpi_j)
$
where $g_j : A \to A$ sends the $j$th coordinate to $1$, i.e. by \cref{lem:stabilizers-functorial},
\begin{align*}
    g_j(A)
    &=
    \varpi_j^{-1}(\{1\})
    =
    \Stab_A(\delta_j)
    \\
    \iff
    F^{-1}(g_j(A))
    &=
    \Stab_M(F^{-1}(\delta_j))
    =
    \Stab_M(\delta_{S_j})
    =
    \pi_{S_j}^{-1}(\{1\})
    \\
    \iff
    \pi_{S_j} \circ F^{-1} \circ g_j (A)
    &=
    \{1\}.
\end{align*}
Thus pointwise
\begin{align*}
    \pi_{S_j} \circ F^{-1}
    &=
    (\pi_{S_j} \circ F^{-1} \circ g_j) \cdot (\pi_{S_j} \circ F^{-1} \circ \jmath_j \circ \varpi_j) = 1 \cdot (\theta_j \circ \varpi_j)
    \\
    \pi_{S_j}
    &=
    \theta_j \circ \varpi_j \circ F = \theta_j \circ f_j
\end{align*}
as desired.
Since $A_j$ is nontrivial, $S_j$ is nonempty.

It remains to show that the $S_j$ form a partition.
Now
$
\delta_j\delta_k = 0
$
for $j \neq k$, so that
$
S_j \cap S_k = \varnothing.
$
Moreover, for every $i \in I$ we have that
$
F(\iota_i(0)) \neq 1_A,
$
hence there is a $j \in J$ such that by \cref{lem:stabilizers-functorial},
\begin{align*}
    \varpi_j(F(\iota_i(0))) &\neq 1 \\
    \iff F(\iota_i(0)) &\notin \varpi_j^{-1}(\{1\}) = \Stab_A(\delta_j) \\
    \iff \iota_i(0) &\notin F^{-1}(\Stab_A(\delta_j)) = \Stab_M(F^{-1}(\delta_j)) = \Stab_M(\delta_{S_j}) \\
    \iff \iota_i(0) \delta_{S_j} &\neq \delta_{S_j} \\
    \iff i &\in S_j.
\end{align*}
Since this is true for all $i \in I$, we get that
$$
\bigcup_{j \in J} S_j = I.
$$
Thus the $S_j$ form a partition.

The uniqueness of $S_j$ follows from the uniqueness of
$
e_j = F^{-1}(\epsilon_j).
$ 
Once $S_j$ is fixed, $\theta_j$ is uniquely determined by $\theta_j f_j = \pi_{S_j}$ since $f_j$ is surjective.
\end{proof}

\begin{corollary}
\label{cor:strict-refinement}
Let
$$
M = \prod_{i \in I} M_i,
$$
where the $M_i$ are nontrivial monoids with zero satisfying
$$
\CCE(M_i) = \{0,1\}.
$$
Then $M$ has the strict refinement property.
\end{corollary}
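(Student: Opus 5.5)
Given two product decompositions $(f_i : M\to A_i)_{i\in I'}$ and $(g_j : M\to B_j)_{j\in J}$, I would apply \cref{prop:canonical-refinement} to each of them. This yields partitions $I=\bigsqcup_{i'} S_{i'}$ and $I=\bigsqcup_j T_j$, together with isomorphisms $\theta_{i'}:A_{i'}\xrightarrow{\sim}M_{S_{i'}}$ and $\eta_j:B_j\xrightarrow{\sim}M_{T_j}$ satisfying $\theta_{i'}f_{i'}=\pi_{S_{i'}}$ and $\eta_j g_j=\pi_{T_j}$. One preliminary point needs care. \cref{prop:canonical-refinement} assumes the factors are nontrivial, while \cref{def:finite-strict-refinement} allows trivial ones. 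I would first set aside the indices with trivial $A_{i'}$ or $B_j$. For a trivial factor, assign $S_{i'}=\varnothing$ (respectively $T_j=\varnothing$), with $M_\varnothing$ the trivial monoid and the unique maps. Removing trivial factors does not affect whether the induced map is an isomorphism, so the proposition still applies to the remaining nontrivial factors.

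The common refinement is obtained by intersecting blocks. Set $C_{i'j}=M_{S_{i'}\cap T_j}$ and define
\[
\alpha_{i'j}=\pi^{S_{i'}}_{S_{i'}\cap T_j}\circ\theta_{i'},\qquad
\beta_{i'j}=\pi^{T_j}_{S_{i'}\cap T_j}\circ\eta_j,
\]
where $\pi^{S}_{U}:M_S\to M_U$ denotes coordinate projection for $U\subseteq S$. Since coordinate projections compose, we get $\alpha_{i'j}f_{i'}=\pi_{S_{i'}\cap T_j}=\beta_{i'j}g_j$, which is the required compatibility.

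It remains to check that the induced maps $A_{i'}\to\prod_j C_{i'j}$ and $B_j\to\prod_{i'}C_{i'j}$ are isomorphisms. The first map is $\theta_{i'}$ followed by the canonical map $M_{S_{i'}}\to\prod_j M_{S_{i'}\cap T_j}$. The sets $S_{i'}\cap T_j$, for $j\in J$, partition $S_{i'}$, with empty pieces contributing trivial factors. Hence this canonical map is the regrouping isomorphism $\prod_{i\in S_{i'}}M_i\cong\prod_j\prod_{i\in S_{i'}\cap T_j}M_i$, which is bijective even for infinite index sets because it just reindexes tuples. The argument for $B_j$ is symmetric.

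The main obstacle is essentially bookkeeping. One must handle trivial factors and empty intersections correctly, and verify that the $\alpha_{i'j}$ assemble into exactly the regrouping map. Once \cref{prop:canonical-refinement} is available, nothing further is needed.
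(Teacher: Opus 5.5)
Your proposal is correct and follows the paper's proof: apply \cref{prop:canonical-refinement} to both decompositions, set $C_{ij}=M_{S_i\cap T_j}$, and take coordinate projections composed with the identifications $\theta$ and $\eta$ as $\alpha_{ij}$ and $\beta_{ij}$, so that the induced maps are regrouping isomorphisms. Your explicit handling of trivial factors, by assigning them empty blocks, is a more careful version of the paper's ``after deleting trivial factors''.
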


\begin{proof}
After deleting trivial factors, let
$$
(f_j : M\longrightarrow A_j)_{j\in J},
\qquad
(g_k : M\longrightarrow B_k)_{k\in K}
$$
be two product decompositions.
By
\cref{prop:canonical-refinement}, they correspond to partitions
$$
I
= 
\bigsqcup_{j \in J} S_j
= 
\bigsqcup_{k \in K} T_k
$$
and identifications
$$
A_j \cong M_{S_j},
\qquad
B_k \cong M_{T_k},
$$
compatible with the coordinate projections.

For $(j,k) \in J \times K$, set
$$
C_{jk} = M_{S_j \cap T_k}.
$$
The intersections $S_j \cap T_k$, as $k$ varies, partition $S_j$,
while, as $j$ varies, they partition $T_k$.
Hence
$$
A_j \cong \prod_{k \in K} C_{jk},
\qquad
B_k \cong \prod_{j \in J} C_{jk}.
$$
Taking the corresponding coordinate projections gives maps $\alpha_{jk}$ and $\beta_{jk}$ satisfying
$$
\alpha_{jk} f_j
= 
\pi_{S_j \cap T_k}
= 
\beta_{jk} g_k.
$$
Thus the two decompositions admit a common rectangular refinement.
\end{proof}

\begin{remark}
The preceding proposition is the concrete contribution of this subsection.
The existence and general theory of strict refinements are classical; what is specific here is that the multiplicative coatom--stabilizer construction identifies every decomposition with a literal partition of the original coordinate set.
Consequently the common refinement of two decompositions is explicitly obtained by intersecting the corresponding blocks.
The strict refinement property is therefore a consequence of the same factor-reconstruction mechanism used to prove rigidity, rather than an additional hypothesis.
\end{remark}

\subsection{Why the absorbing zero matters}
\label{subsec:zero}

The absorbing zero is essential to the reconstruction mechanism.
Without it, even the complete absence of nontrivial idempotents does not prevent automorphisms from mixing direct factors.

\begin{example}
\label{ex:groups-no-zero}
Let $p$ be a prime.
The group
$
C_p\times C_p
$
has no idempotents other than its identity, but
$$
\Aut(C_p\times C_p)
\cong
\GL_2(\mathbb F_p).
$$
In additive notation, for example,
$$
(x,y)\longmapsto(x+y,y)
$$
is an automorphism which mixes the two cyclic factors and is not monomial with respect to the displayed decomposition.

Thus trivial idempotent structure alone does not imply product rigidity.
In the setting of \cref{thm:rigidity}, the absorbing zero produces the coordinate idempotents
$
\varepsilon_i,
$
whose stabilizers recover the factors.
\end{example}

Adjoining an absorbing zero restores precisely this mechanism.

\begin{example}
Let $G$ be a group and let
$
G^0 = G\sqcup\{0\}
$
be the monoid obtained by adjoining an absorbing zero.
Since a group has no idempotent other than its identity,
$$
\CE(G^0) = \CCE(G^0) = \{0,1\}.
$$
Therefore
$$
\Aut((G^0)^n)
\cong
\Aut(G^0)^n \rtimes S_n
=
\Aut(G)^n \rtimes S_n.
$$
\end{example}

\begin{remark}
The identity plays a role complementary to that of the absorbing zero.
Together, $0$ and $1$ produce the coordinate idempotents $\varepsilon_i$ and $\delta_i$.
The identity also allows the factors to be embedded as
\[
\iota_i(M_i)
=
\{(x_j)_{j\in I}:x_i\in M_i,\ x_j=1\text{ for }j\neq i\}.
\]
For every $x=(x_j)_{j\in I}\in M$, we have
\(
x\delta_i=\delta_i\iota_i(x_i)
\),
so multiplication by $\delta_i$ isolates the $i$-th coordinate.
When $I$ is finite, one may even reconstruct $x$ from
\[
x = \prod_{i\in I}\iota_i(x_i),
\]
but this finite factorization is neither available nor needed for arbitrary products.

For a semigroup with zero but without identity, the coordinate axes obtained by inserting zeros remain subsemigroups, but their products do not reconstruct arbitrary elements of the direct product.
Thus the coatom--stabilizer method developed here does not extend formally to arbitrary semigroups without identity; at that level, product decompositions are more naturally encoded by factor congruences.
\end{remark}

\section{Consequences for rings and arithmetic}
\label{sec:rings}

We now apply the preceding results to multiplicative monoids of rings.
The relation between central idempotents and direct-product decompositions of unital rings is classical; the first two propositions below are recalled only to translate our monoid hypothesis into standard ring-theoretic language.
The new point for the present paper is the consequence for isomorphisms of the multiplicative monoids, where no additivity is assumed.

For a unital ring $R$, the central idempotents of the multiplicative monoid $(R,\cdot)$ are exactly the central idempotents of $R$ in the usual ring-theoretic sense.
Moreover, every such idempotent is automatically complemented.

\begin{proposition}
\label{prop:ring-cce}
Let $R$ be a unital ring.
Then
$$
\CCE(R,\cdot) = \CE(R,\cdot).
$$
More precisely, if $e$ is a central idempotent, then $1-e$ is its complement in $\CE(R,\cdot)$.
Consequently,
$$
\CCE(R,\cdot) = \{0,1\}
\quad\Longleftrightarrow\quad
R\text{ has no nontrivial central idempotents}.
$$
\end{proposition}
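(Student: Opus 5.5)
The plan is to verify directly, for a central idempotent $e$ of $R$, that $f = 1-e$ is a complement of $e$ in the sense of the definition of $\CCE$. The inclusion $\CCE(R,\cdot)\subseteq\CE(R,\cdot)$ holds by definition, so only the reverse inclusion needs proof. The final equivalence then follows at once, since the central idempotents of the multiplicative monoid are exactly the ring-theoretic central idempotents. Centrality in $(R,\cdot)$ means $ex=xe$ for all $x\in R$, which is the usual notion, and the absorbing zero of $(R,\cdot)$ is the ring zero.

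First, $1-e$ lies in $\CE(R,\cdot)$. Idempotence follows from $(1-e)^2 = 1-2e+e^2 = 1-e$, and centrality follows from $(1-e)x = x-ex = x-xe = x(1-e)$. Next, $e(1-e) = e-e^2 = 0$. Here the ring's additive structure is used, but only to verify properties internal to the multiplicative monoid, and that is legitimate.

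The main step is to show that $1$ is the unique common upper bound of $e$ and $1-e$ in $\CE(R,\cdot)$. Let $g\in\CE(R,\cdot)$ satisfy $e\le g$ and $1-e\le g$, that is, $eg=e$ and $(1-e)g = 1-e$. Adding these two equations gives $g = eg + (1-e)g = e + (1-e) = 1$. Since $1$ is clearly an upper bound, it is the unique one. So $e\in\CCE(R,\cdot)$, which gives $\CE=\CCE$.

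There is no serious obstacle. The only point to watch is that the order and the complement condition are defined purely multiplicatively, while the verification may freely use distributivity in $R$, because we are checking properties of a fixed element rather than of monoid isomorphisms. The "consequently" clause is then immediate: $\CCE(R,\cdot)=\{0,1\}$ if and only if $\CE(R,\cdot)=\{0,1\}$, which says exactly that $R$ has no nontrivial central idempotents.
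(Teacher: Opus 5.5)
Your proposal is correct and follows the paper's argument: you show that $1-e$ is a central idempotent with $e(1-e)=0$, then add $eg=e$ and $(1-e)g=1-e$ to force $g=1$. Your explicit checks that $1-e$ is idempotent and central, that $1$ is an upper bound, and the derivation of the final equivalence fill in details the paper leaves implicit.
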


\begin{proof}
Let $e$ be a central idempotent.
Then $1-e$ is again a central idempotent and
$
e(1-e) = 0
$.
Suppose that $g\in\CE(R,\cdot)$ is a common upper bound of $e$ and $1-e$.
Then
$
eg = e
$
and
$
(1-e)g = 1-e
$.
Adding the two equalities gives
$$
g = (e+1-e)g = e+(1-e) = 1.
$$
Thus $1-e$ is a complement of $e$, and every central idempotent is complemented.
\end{proof}

The preceding proposition identifies the hypothesis of the monoid rigidity theorem with direct indecomposability on the ring side.

\begin{proposition}
\label{prop:ring-direct-indecomposable}
Let $R$ be a nonzero unital ring.
The following conditions are equivalent:
\begin{enumerate}[label = \textup{(\roman*)}]
\item $R$ is directly indecomposable as a ring;
\item $R$ has no nontrivial central idempotents;
\item $\CE(R,\cdot) = \{0,1\}$;
\item $\CCE(R,\cdot) = \{0,1\}$.
\end{enumerate}
\end{proposition}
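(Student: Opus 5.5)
The plan is to establish the cycle of equivalences by proving (i)$\iff$(ii) with the classical ring decomposition, and then reading off (ii)$\iff$(iii)$\iff$(iv) from definitions and \cref{prop:ring-cce}.

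The equivalence (ii)$\iff$(iii) is immediate from the definitions. The central idempotents of the multiplicative monoid $(R,\cdot)$ are exactly the elements $e$ with $e^2=e$ and $ex=xe$ for all $x\in R$, which is the ring-theoretic notion of central idempotent. Because $R$ is nonzero, $0\neq 1$, so "no nontrivial central idempotents" means precisely $\CE(R,\cdot)=\{0,1\}$. The equivalence (iii)$\iff$(iv) is exactly \cref{prop:ring-cce}, which gives $\CCE(R,\cdot)=\CE(R,\cdot)$.

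For (i)$\iff$(ii) I will use the standard correspondence between central idempotents and ring decompositions, proving both directions by contraposition.

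\textbf{Not (i) $\Rightarrow$ not (ii).} Suppose $R\cong A\times B$ as rings with $A$ and $B$ nonzero. Then the element corresponding to $(1,0)$ is a central idempotent. It differs from $0$ because $A\neq 0$, and it differs from $1$ because $B\neq 0$.

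\textbf{Not (ii) $\Rightarrow$ not (i).} Suppose $e$ is a central idempotent with $e\neq 0,1$. Then $Re$ and $R(1-e)$ are two-sided ideals, and they are unital rings with identities $e$ and $1-e$ respectively. I will show that the map
$$
R\to Re\times R(1-e),\qquad x\mapsto (xe,\,x(1-e))
$$
is a ring isomorphism. Additivity is clear. Multiplicativity follows from centrality and idempotence, since $(xy)e=(xe)(ye)$. The inverse is $(a,b)\mapsto a+b$: for $a\in Re$ we have $a(1-e)=0$, and symmetrically for $b\in R(1-e)$, which makes the map a bijection. Finally, both factors are nonzero: $e\neq 0$ gives $Re\neq 0$, and $e\neq 1$ gives $1-e\neq 0$, so $R(1-e)\neq 0$.

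No step is a real obstacle. The only point needing care is checking that the decomposition map respects multiplication, and this is exactly where centrality of $e$ is used. I would state this check briefly and note that it is the classical Peirce decomposition.
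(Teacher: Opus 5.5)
Your proposal is correct and matches the paper's proof: (ii)$\iff$(iii)$\iff$(iv) via \cref{prop:ring-cce}, and (i)$\iff$(ii) via the Peirce decomposition $x\mapsto(xe,\,x(1-e))$ with inverse $(a,b)\mapsto a+b$ in one direction, and pulling back $(1,0)$ in the other. Your explicit checks of multiplicativity, bijectivity and the nonvanishing of both factors fill in details the paper leaves implicit.
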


\begin{proof}
The equivalence of \textup{(ii)}, \textup{(iii)}, and \textup{(iv)}
follows from \cref{prop:ring-cce}.

If $e$ is a nontrivial central idempotent, then
$$
R\longrightarrow eR\times(1-e)R,
\qquad
r\longmapsto (er,(1-e)r),
$$
is an isomorphism of rings, with inverse
$
(x,y)\mapsto x+y.
$
Thus a directly indecomposable ring has no nontrivial central idempotents.

Conversely, if
$
R\cong R_1\times R_2
$
with $R_1$ and $R_2$ nonzero, then the inverse image of $(1,0)$ is a central idempotent distinct from $0$ and $1$.
Hence \textup{(ii)}
implies \textup{(i)}.
\end{proof}

We therefore obtain the following multiplicative rigidity statement.
It should not be confused with the classical uniqueness of ring-product decompositions: the isomorphism below is not assumed to preserve addition.
Nor do we assume that the factors are unique-addition rings in the sense of \cite{StephensonUA}; the factor reconstruction is performed directly inside the multiplicative monoid.

\begin{corollary}
\label{cor:ring-rigidity}
Let
$R_i$ for $i \in I$
and
$S_j$ for $j \in J$
be nonzero directly indecomposable unital rings.
Then every multiplicative-monoid isomorphism
$$
\prod_{i \in I} (R_i,\cdot)
\xrightarrow{\sim}
\prod_{j \in J} (S_j,\cdot)
$$
is monomial.
In particular, $|I| = |J|$, and after a unique bijection of the factors the isomorphism is induced by multiplicative-monoid isomorphisms
$$
(R_i,\cdot)\xrightarrow{\sim}(S_{\sigma(i)},\cdot).
$$
\end{corollary}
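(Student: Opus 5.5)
The plan is to reduce this corollary directly to \cref{thm:rigidity} by checking its hypotheses on the multiplicative monoids. For each $i\in I$, the multiplicative monoid $(R_i,\cdot)$ is a monoid with identity $1$ and absorbing zero $0$, since $0\cdot r=r\cdot 0=0$ in any ring. It is nontrivial because $R_i$ is a nonzero ring, so $0\neq 1$. The same holds for each $(S_j,\cdot)$.

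The only hypothesis needing an argument is $\CCE(R_i,\cdot)=\{0,1\}$. This is exactly the equivalence \textup{(i)}$\iff$\textup{(iv)} of \cref{prop:ring-direct-indecomposable}, applied to the directly indecomposable ring $R_i$, and likewise for each $S_j$. No additivity of the given isomorphism is used anywhere. All the structure invoked lives inside the multiplicative monoids, and \cref{lem:isomorphism-central} only needs a monoid isomorphism.

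Next I would note that the product $\prod_{i\in I}(R_i,\cdot)$ of multiplicative monoids is literally the multiplicative monoid of the product ring, and in any case is the monoid direct product to which \cref{thm:rigidity} applies. With the hypotheses verified, \cref{thm:rigidity} says that every monoid isomorphism $\Phi$ between the two products is monomial. It supplies a unique bijection $\sigma\colon I\to J$ and unique multiplicative-monoid isomorphisms $\phi_i\colon(R_i,\cdot)\xrightarrow{\sim}(S_{\sigma(i)},\cdot)$ satisfying $\Phi\circ\iota_i=\iota'_{\sigma(i)}\circ\phi_i$ and $\pi'_{\sigma(i)}\circ\Phi=\phi_i\circ\pi_i$. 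The existence of the bijection $\sigma$ gives $|I|=|J|$.

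There is no real obstacle. The only point needing care is to stress that \cref{prop:ring-direct-indecomposable} is used purely to translate the ring-theoretic hypothesis into the monoid condition $\CCE=\{0,1\}$. The rigidity itself comes from \cref{thm:rigidity}, whose proof uses only multiplication, $0$ and $1$.
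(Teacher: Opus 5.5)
Your proposal is correct and matches the paper's proof: you use \cref{prop:ring-direct-indecomposable} to turn direct indecomposability into $\CCE(R_i,\cdot)=\CCE(S_j,\cdot)=\{0,1\}$, then apply \cref{thm:rigidity}. The additional checks you make are details the paper leaves implicit: the absorbing zero, nontriviality from $0\neq 1$, and $|I|=|J|$ from the bijection $\sigma$.
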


\begin{proof}
By \cref{prop:ring-direct-indecomposable},
$$
\CCE(R_i,\cdot) = \CCE(S_j,\cdot) = \{0,1\}
$$
for all $i$ and $j$.
The result follows from
\cref{thm:rigidity}.
\end{proof}

The noncommutative case illustrates particularly clearly why the relevant objects are central idempotents rather than arbitrary idempotents.

\begin{example}
\label{ex:matrix-rings}
Let $k$ be a field and $d\geq2$.
The matrix ring $M_d(k)$ contains many nontrivial idempotents, for instance
$
\operatorname{diag}(1,0,\dotsc,0).
$
Such idempotents do not in general encode direct-product decompositions.
Indeed, if
$$
e = \operatorname{diag}(1,0,\dotsc,0)
\qquad\text{and}\qquad
f = I_d-e,
$$
then $ef = fe = 0$, and any idempotent $g$ satisfying
$
eg = e$ and $fg = f$ must be $I_d$, since adding the two equalities gives $g = (e+f)g = e+f = I_d$.
Thus even relations formally resembling complementarity may occur for an idempotent which does not define a product factor.
The missing condition is centrality.

On the other hand,
$
Z(M_d(k)) = kI_d
$.
Hence a central idempotent has the form $\lambda I_d$ with
$
\lambda^2 = \lambda,
$
so necessarily $\lambda = 0$ or $1$.
Therefore
$$
\CE(M_d(k),\cdot)
= 
\CCE(M_d(k),\cdot)
= 
\{0,I_d\}.
$$
Thus $M_d(k)$ is directly indecomposable, and
\cref{cor:ring-rigidity} applies to arbitrary products of matrix rings.
This illustrates why centrality is essential in the reconstruction criterion: arbitrary idempotents may describe corners or retract-like structures, while central idempotents are the ones compatible with genuine product splittings.

For example, every multiplicative-monoid isomorphism between two arbitrary products of matrix rings
$$
\prod_i M_{d_i}(k_i)
\qquad\text{and}\qquad
\prod_j M_{e_j}(\ell_j)
$$
must permute the multiplicative monoids of the matrix-ring factors and act factorwise.
\end{example}

\begin{remark}
In the commutative case, direct indecomposability is equivalent to connectedness of the spectrum.
Thus \cref{cor:ring-rigidity} applies to all connected commutative rings, not only to domains or local rings.
In particular, neither the presence of zero divisors nor the existence of several maximal ideals creates additional mixing between factors.
\end{remark}

\subsection{Application to \texorpdfstring{$D$}{D}-rings}

Recall that a commutative ring $R$ is a $D$-ring if every zero divisor is nilpotent.
Such a ring has no nontrivial idempotents.
Indeed, if
$
e^2 = e
$
with
$
e\neq0,1,
$
then
$
e(1-e) = 0
$
and $1-e\neq0$, so $e$ is a zero divisor.
Hence $e$ is nilpotent.
Since an idempotent which is nilpotent must be zero, this is a contradiction.
Thus
$
\Idem(R) = \{0,1\}
$.
Equivalently, $\Spec R$ is connected, and therefore
$$
\CCE(R,\cdot) = \{0,1\}.
$$

Consequently, the preceding rigidity results apply to arbitrary commutative $D$-rings.
Relative to \cite{AtalayeBakerMarquesProducts}, the product-rigidity input is therefore strengthened in two ways: no total-ring-of-fractions hypothesis is required, and repeated isomorphic multiplicative monoids can be retained and handled by the permutation part of \cref{cor:wreath} rather than being excluded by a cardinality condition.

\begin{corollary}
\label{cor:D-ring-product}
Let $R_i$ for $i \in I$ be nonzero commutative $D$-rings such that the multiplicative monoids
$
(R_i,\cdot)
$
are pairwise nonisomorphic.
Then
$$
\Aut\!\left(\prod_{i \in I} (R_i,\cdot)\right)
\cong
\prod_{i \in I} \Aut(R_i,\cdot).
$$
\end{corollary}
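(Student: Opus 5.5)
The plan is to reduce this to \cref{cor:wreath}, in the special case described in \cref{rem:special-wreath-cases}. There each coordinate forms its own isomorphism class, so every $I_\lambda$ is a singleton and every $\Sym(I_\lambda)$ is trivial.

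First I will check the hypothesis of \cref{thm:rigidity}, namely $\CCE(R_i,\cdot)=\{0,1\}$ for each $i$. The paragraph preceding the corollary shows that a commutative $D$-ring has no nontrivial idempotents. In a commutative ring every idempotent is central, so $R_i$ has no nontrivial central idempotents. Since $R_i$ is nonzero, $(R_i,\cdot)$ is a nontrivial monoid with absorbing zero $0$. \cref{prop:ring-cce} then gives $\CCE(R_i,\cdot)=\CE(R_i,\cdot)=\{0,1\}$.

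Next I will apply \cref{cor:wreath}. Take $\Lambda=I$, $P_i=(R_i,\cdot)$ and $I_i=\{i\}$. The factors are pairwise nonisomorphic by assumption, so the corollary yields
\[
\Aut\Bigl(\prod_{i\in I}(R_i,\cdot)\Bigr)\cong\prod_{i\in I}\bigl(\Aut(R_i,\cdot)^{\{i\}}\rtimes\Sym(\{i\})\bigr)\cong\prod_{i\in I}\Aut(R_i,\cdot).
\]

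Alternatively, I can argue directly from \cref{thm:rigidity} with $N=M$. Any automorphism $\Phi$ determines a bijection $\sigma:I\to I$ together with isomorphisms $(R_i,\cdot)\cong(R_{\sigma(i)},\cdot)$. Pairwise nonisomorphism forces $\sigma=\id$. The projection identities $\pi_i\circ\Phi=\phi_i\circ\pi_i$ then show that $\Phi=\prod_i\phi_i$. This gives an injective homomorphism $\Phi\mapsto(\phi_i)_i$, and it is surjective because any family of automorphisms acts coordinatewise.

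There is no real obstacle here. The only points needing care are the infinite-$I$ case, which is handled by the projection identities in \cref{thm:rigidity} and not by a finite factorization, and checking that the map $\Phi\mapsto(\phi_i)_i$ is multiplicative. The latter follows from uniqueness of the $\phi_i$: since $\phi_i=\pi_i\circ\Phi\circ\iota_i$, composing two automorphisms composes their components.
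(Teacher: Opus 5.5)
Your proposal is correct and follows the paper's proof. You verify $\CCE(R_i,\cdot)=\{0,1\}$: the paper phrases this via connectedness of $\Spec R_i$, you via the absence of nontrivial idempotents together with \cref{prop:ring-cce}, which comes to the same thing. You then apply \cref{cor:wreath} with singleton blocks, so that every symmetric-group factor is trivial; your alternative direct argument from \cref{thm:rigidity} just unpacks that corollary in this special case and is also sound.
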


\begin{proof}
Each $R_i$ has connected spectrum, and hence
$
\CCE(R_i,\cdot) = \{0,1\}
$.
Since the multiplicative monoids are pairwise nonisomorphic, no nontrivial permutation of factors is possible.
The result follows from \cref{cor:wreath}.
\end{proof}

More generally, if isomorphic multiplicative monoids occur with multiplicities, \cref{cor:wreath} gives the corresponding wreath-product factors.
Thus repeated factors require no separate argument.

\subsection{The multiplicative monoid of \texorpdfstring{$\mathbb Z/n\mathbb Z$}{Z/nZ}}

We finish with the multiplicative monoid of a residue-class ring.
The Chinese-remainder decomposition and the local prime-power calculations are classical or come from our earlier work; the role of the present paper is to supply the general multiplicative rigidity principle that passes from the local factors to the global automorphism group.
Let
$$
n = \prod_{i = 1}^r p_i^{e_i}
$$
be the prime-power decomposition of $n$.
By the Chinese remainder theorem,
$
\mathbb Z/n\mathbb Z
\cong
\prod_{i = 1}^r\mathbb Z/p_i^{e_i}\mathbb Z
$
as rings, and therefore
$$
(\mathbb Z/n\mathbb Z,\cdot)
\cong
\prod_{i = 1}^r
(\mathbb Z/p_i^{e_i}\mathbb Z,\cdot).
$$

Each ring
$
\mathbb Z/p_i^{e_i}\mathbb Z
$
is local, hence directly indecomposable.
Equivalently,
$$
\CCE(\mathbb Z/p_i^{e_i}\mathbb Z,\cdot) = \{0,1\}.
$$
Moreover, the prime powers $p_i^{e_i}$ are pairwise distinct, so the corresponding multiplicative monoids have different cardinalities and are therefore pairwise nonisomorphic.
The wreath-product formula thus reduces to a direct product.

\begin{corollary}
\label{cor:Zn-product}
Let the positive integer $n$ have the prime-power decomposition
$$
n = \prod_{i = 1}^r p_i^{e_i}.
$$
Then
$$
\Aut(\mathbb Z/n\mathbb Z,\cdot)
\cong
\prod_{i = 1}^r
\Aut(\mathbb Z/p_i^{e_i}\mathbb Z,\cdot).
$$
\end{corollary}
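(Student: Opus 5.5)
The plan is to obtain \cref{cor:Zn-product} as a special case of \cref{cor:wreath}, or more directly of \cref{cor:D-ring-product}. First I fix the Chinese-remainder ring isomorphism
\[
\mathbb Z/n\mathbb Z\xrightarrow{\sim}\prod_{i=1}^r\mathbb Z/p_i^{e_i}\mathbb Z,\qquad x\mapsto (x \bmod p_i^{e_i})_i .
\]
Forgetting addition, this is an isomorphism of multiplicative monoids with zero. Conjugation by it identifies $\Aut(\mathbb Z/n\mathbb Z,\cdot)$ with the automorphism group of the product monoid. It therefore suffices to compute $\Aut$ of $\prod_i(\mathbb Z/p_i^{e_i}\mathbb Z,\cdot)$.

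Next I verify the hypotheses of \cref{cor:wreath} for each factor $P_i=(\mathbb Z/p_i^{e_i}\mathbb Z,\cdot)$. Nontriviality is immediate, since $p_i^{e_i}\geq 2$. To get $\CCE(P_i)=\{0,1\}$, I will argue through \cref{prop:ring-direct-indecomposable}. The ring $\mathbb Z/p^e\mathbb Z$ is local with maximal ideal $(p)$, and every element outside $(p)$ is a unit. An idempotent $u$ satisfies $u(1-u)=0$, so $u$ and $1-u$ cannot both be units. Hence one of them lies in $(p)$ and is nilpotent. A nilpotent idempotent is $0$, so $u\in\{0,1\}$.

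Alternatively, $\mathbb Z/p^e\mathbb Z$ is a $D$-ring, because every zero divisor is divisible by $p$ and hence nilpotent. This lets me invoke \cref{cor:D-ring-product} directly once pairwise nonisomorphism is established.

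Pairwise nonisomorphism is the remaining point. The $p_i$ are distinct primes, so the $p_i^{e_i}$ are pairwise distinct integers by unique factorization. The monoids $P_i$ therefore have pairwise distinct finite cardinalities and cannot be isomorphic. Applying \cref{cor:wreath} with $\Lambda=\{1,\dots,r\}$ and every $I_\lambda$ a singleton, each $\Sym(I_\lambda)$ is trivial. The formula then collapses to $\prod_i\Aut(P_i)$, exactly as in \cref{rem:special-wreath-cases}.

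I expect no genuine obstacle; the argument is a bookkeeping exercise. The only point needing care is that the corollaries concern monoid automorphisms, not ring automorphisms. The Chinese-remainder map must therefore be used only as a monoid isomorphism, and transport of $\Aut$ along it is plain conjugation. The local-ring idempotent argument is the one substantive check, and it is standard.
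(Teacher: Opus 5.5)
Your proposal is correct and follows the paper's own argument: you transport along the Chinese-remainder isomorphism of multiplicative monoids, check $\CCE(\mathbb Z/p_i^{e_i}\mathbb Z,\cdot)=\{0,1\}$ via locality, use distinct cardinalities for pairwise nonisomorphism, and apply \cref{cor:wreath} with singleton index sets. Your explicit idempotent check in $\mathbb Z/p^e\mathbb Z$ simply spells out the step the paper states as ``local, hence directly indecomposable.''
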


\begin{proof}
Apply \cref{cor:wreath} to the decomposition above.
\end{proof}

The local factors were determined in
\cite{AtalayeBakerMarquesPrimePower}.
Combining that calculation with
\cref{cor:Zn-product} therefore gives an explicit local-to-global description of
$
\Aut(\mathbb Z/n\mathbb Z,\cdot)
$
for arbitrary $n$.

\section*{Declarations}

\subsection*{Ethical approval}
Not applicable.

\subsection*{Competing interests} 
Not applicable.

\subsection*{Authors' contributions} 
All authors contributed equally.

\subsection*{Availability of data and materials}
Not applicable. 

\subsection*{Funding}
Not applicable. 

\providecommand{\doi}[1]{DOI~\href{https://doi.org/#1}{\texttt{#1}}}
\providecommand{\arxiv}[1]{arXiV~\href{https://arxiv.org/abs/#1}{\texttt{#1}}}

\end{document}